%
\documentclass[12pt, reqno]{amsart}
\usepackage{amsmath, amsthm, amscd, amsfonts, amssymb, graphicx, color}
\usepackage[bookmarksnumbered, colorlinks, plainpages]{hyperref}
\usepackage{cite}
\textheight 22.5truecm \textwidth 14.5truecm
\setlength{\oddsidemargin}{0.35in}\setlength{\evensidemargin}{0.35in}

\setlength{\topmargin}{-.5cm}

\newtheorem*{theoA}{Theorem A}
\newtheorem*{theoB}{Theorem B}
\newtheorem*{theoC}{Theorem C}
\newtheorem*{theoD}{Theorem D}
\newtheorem*{theoE}{Theorem E}

\newtheorem{theo}{Theorem}[section]
\newtheorem{lem}{Lemma}[section]

\newtheorem{exm}{Example}[section]

\newtheorem{ques}{Question}[section]
\newcommand{\ol}{\overline}
\newcommand{\be}{\begin{equation}}
	\newcommand{\ee}{\end{equation}}
\newcommand{\beas}{\begin{eqnarray*}}
	\newcommand{\eeas}{\end{eqnarray*}}
\newcommand{\bea}{\begin{eqnarray}}
	\newcommand{\eea}{\end{eqnarray}}
\numberwithin{equation}{section}

\begin{document}
	\setcounter{page}{1}
	
\title[existence of entire solutions.....]{Characterization of entire solutions of systems of quadratic trinomial difference and partial differential difference equations in $\mathbb{C}^n$}
	
	\author[G. Haldar]{Goutam Haldar}
	
	\address{Department of Mathematics, Malda College, Malda, West Bengal 732101, India.}
	\email{goutamiit1986@gmail.com}
	
	
	\subjclass{39A45, 30D35, 32H30, 39A14, 35A20}
	
	\keywords{entire solutions, Fermat-type, differential difference equations, Nevanlinna theory.}
	\date{Received: xxxxxx; Revised: yyyyyy; Accepted: zzzzzz.
		\newline\indent $^{*}$ Corresponding author}
	
\begin{abstract}
In this paper we establish some results about the existence and precise forms of finite order entire solutions of some systems of quadratic trinomial functional equations one of which in $\mathbb{C}^n$, $n\in\mathbb{N}$ and other two in $\mathbb{C}^2$. Our results are the generalizations and improvements of the previous theorems given by Xu-Cao \cite{Xu & Cao & 2018,Xu & Cao & 2020} and Xu-Jiang \cite{Xu Jiang RCSM 2022}. Moreover, we exhibit some examples in support of our claims.
\end{abstract} \maketitle
	
\section{Introduction}
It has a long history on the study of entire and meromorphic solutions of Fermat type equations by employing Nevanlinna theory as a tool. For detail study, we insist the readers to go through \cite{Montel & Paris & 1927,Gross & Bull. Amer. & 1966, Gross & Amer. Math. & 1966,Iyer & J. Indian. Math. Soc. & 1939,Chiang & Feng & Trans. Am. & 2009,Chen & JMMA & 2011,Chen & Acta Mathematica Scientia & 2012,Zheng Tu & JMMA & 2011,Yang & Li & 2004, Tailor & Wiles & 1995,Wiles & Ann. Math. 1995,Montel & Paris & 1927} and the references therein. We assume the readers are familiar with the notations and basic definitions of Nevanlinna theory (see \cite{Laine & 1993,Hayman & 1964}). In 1939, Iyer \cite{Iyer & J. Indian. Math. Soc. & 1939} proved that the entire solutions of Fermat type equation \bea\label{e1.1} f(z)^{2}+g(z)^2=1\eea are $f=\cos a(z)$,  $g=\sin a(z)$, where $a(z)$ is an entire function.\vspace{1.2mm}
\par Utilizing difference Nevanlinna theory of meromorphic functions developed by Halburd-Korhonen \cite{Halburd & Korhonen & 2006, Halburd & Korhonen & Ann. Acad. & 2006}, and Chiang-Feng \cite{Chiang & Feng & 2008}, independently, Liu \textit{et al.} \cite{Liu & Cao & Arch. Math. & 2012}, in $2012$, proved that any finite order transcendental entire solutions of \bea\label{e1.2} f(z)^2+f(z+c)^2=1,\eea has the form $f(z)=\sin(az+b)$, where $b$ is a constant and $a=(4k+1)\pi/2c$, with $k$ is an integer. In the same paper, they also proved that any finite order transcendental entire solution of \bea\label{e1.3}f^{\prime}(z)^2+f(z+c)^2=1\eea has the form $f(z)=\sin(z\pm Bi)$, where $B$ is a constant and $c=2p\pi$ or $c=(2p+1)\pi$, $p\in\mathbb{Z}$, whereas any transcendental entire solutions with finite order of \bea\label{e1.4}f^{\prime}(z)^2+[f(z+c)-f(z)]^2=1,\eea must take the form $f(z)=12\sin(2z+bi)$, where $c=(2p+1)\pi$, $p$ being an integer, and $b$ is a constant.\vspace{1.2mm}
\par Although there are many important and interesting results on the existence of entire and meromorphic solutions of complex difference as well as differential-difference equations (see \cite{Liu & Cao & EJDE & 2013,Liu & Cao & Arch. Math. & 2012,Liu & Yang & 2016,Liu & JMAA & 2009,Liu & Yang & CMFT & 2013,Liu-Dong & EDJE & 2015,Liu-Gao & Sci. Math. & 2019,Tang & Liao & 2007}), only a few number of results are there on the existence and forms of entire and meromorphic solutions of quadratic trinomial functional equation. To the best of our knowledge, Saleeby \cite{Saleeby & Aeq. Math. & 2013} first investigated the existence and forms of entire and meromorphic solutions of quadratic trinomial functional equation \bea\label{e1.3a} f^2+2wfg+g^2=1,\eea where $w$ is a constant.  Observe that \eqref{e1.1} can be seen as the case of $w=0$. For $w\neq0$, Saleeby \cite{Saleeby & Aeq. Math. & 2013} proved the following classical results about the Fermat type functional equations \eqref{e1.3a}.
\begin{theoA}{\em\cite{Saleeby & Aeq. Math. & 2013}}
Let $\alpha\in\mathbb{C}$ with $\alpha^2\neq0,1$. Then any transcendental entire solution of \eqref{e1.3a} has the form $f(z)=\dfrac{1}{\sqrt{2}}\left(\dfrac{\cos(h(z))}{\sqrt{1+w}}+\dfrac{\sin(h(z))}{\sqrt{1-w}}\right)$ and $\dfrac{1}{\sqrt{2}}\left(\dfrac{\cos(h(z))}{\sqrt{1+w}}-\dfrac{\sin(h(z))}{\sqrt{1-w}}\right)$, where $h$ is entire $\mathbb{C}^n$. The meromorphic solutions of \eqref{e1.3a} is of the form $f(z)=\dfrac{\alpha_1-\alpha_2\beta(z)^2}{(\alpha_1-\alpha_2)\beta(z)}$ and $g(z)=\dfrac{1-\beta(z)^2}{(\alpha_1-\alpha_2)\beta(z)}$, where $\beta(z)$ is meromorphic in $\mathbb{C}^n$ and $\alpha_1=-w+\sqrt{w^2-1}$, $\alpha_2=-w-\sqrt{w^2-1}$.\end{theoA}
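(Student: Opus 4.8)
The plan is to diagonalize the quadratic form on the left of \eqref{e1.3a}. Over $\mathbb{C}$ one has the factorization
\[
f^2+2wfg+g^2=(f-\alpha_1 g)(f-\alpha_2 g),
\]
where $\alpha_1=-w+\sqrt{w^2-1}$, $\alpha_2=-w-\sqrt{w^2-1}$ are the roots of $t^2+2wt+1=0$, so that $\alpha_1+\alpha_2=-2w$ and $\alpha_1\alpha_2=1$; the hypothesis $w^2\neq 0,1$ guarantees $w\neq 0$, $\alpha_1\neq\alpha_2$, and $1\pm w\neq 0$. Thus \eqref{e1.3a} becomes $(f-\alpha_1 g)(f-\alpha_2 g)=1$. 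This structural fact drives both halves of the statement.

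For the transcendental entire case I would not use this factorization directly, but an equivalent rotation of variables, because it lands on the stated normal form with no reparametrization. Put $P=(f+g)/\sqrt2$ and $Q=(f-g)/\sqrt2$; a short computation gives
\[
f^2+2wfg+g^2=(1+w)P^2+(1-w)Q^2,
\]
so \eqref{e1.3a} reads $(\sqrt{1+w}\,P)^2+(\sqrt{1-w}\,Q)^2=1$ (fix either branch of the square roots). If $f,g$ are entire on $\mathbb{C}^n$ with at least one transcendental, then $\Phi:=\sqrt{1+w}\,P$ and $\Psi:=\sqrt{1-w}\,Q$ are entire on $\mathbb{C}^n$, at least one transcendental, and satisfy $\Phi^2+\Psi^2=1$. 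Here I invoke the several-variables analogue of Iyer's theorem: from $(\Phi+i\Psi)(\Phi-i\Psi)=1$ the factor $\Phi+i\Psi$ is a zero-free entire function on $\mathbb{C}^n$, hence equals $e^{iH}$ for some entire $H$ on $\mathbb{C}^n$ (a zero-free holomorphic function on the simply connected domain $\mathbb{C}^n$ has a holomorphic logarithm, since $df/f$ is a closed holomorphic $1$-form, hence exact). Then $\Phi-i\Psi=e^{-iH}$, so $\Phi=\cos H$, $\Psi=\sin H$, with $H$ nonconstant by transcendence. Undoing the substitution, $P=\cos H/\sqrt{1+w}$, $Q=\sin H/\sqrt{1-w}$, and $f=(P+Q)/\sqrt2$, $g=(P-Q)/\sqrt2$ produce exactly the two expressions in the statement; the sign and $\cos\leftrightarrow\sin$ ambiguities in the Iyer step are absorbed by replacing $H$ with $-H$ or $\pi/2-H$.

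For the meromorphic case I would use the factorization directly. Let $\beta:=f-\alpha_1 g$, a meromorphic function on $\mathbb{C}^n$; then $(f-\alpha_1 g)(f-\alpha_2 g)=1$ forces $f-\alpha_2 g=1/\beta$. Solving the linear system $f-\alpha_1 g=\beta$, $f-\alpha_2 g=1/\beta$ for $f$ and $g$, and using $\alpha_1\alpha_2=1$ to clear denominators, yields
\[
g=\frac{\beta-1/\beta}{\alpha_2-\alpha_1}=\frac{1-\beta^2}{(\alpha_1-\alpha_2)\beta},\qquad f=\frac{\alpha_2\beta-\alpha_1/\beta}{\alpha_2-\alpha_1}=\frac{\alpha_1-\alpha_2\beta^2}{(\alpha_1-\alpha_2)\beta},
\]
which is the claimed form. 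Finally one checks the converse in each case by direct substitution, using $\cos^2+\sin^2=1$ in the entire case and $\alpha_1+\alpha_2=-2w$, $\alpha_1\alpha_2=1$ in the meromorphic case.

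The only step that is not pure bookkeeping is the $\mathbb{C}^n$ version of Iyer's theorem, and within it the existence of a holomorphic logarithm of a zero-free entire function on $\mathbb{C}^n$; the one minor subtlety is to confirm that the branch choices for $\sqrt{1\pm w}$ and the $\cos/\sin$ swap do not enlarge the solution family beyond the single form stated. Everything else — the factorization, the rotation identity, the $2\times 2$ solve, and the verification — is elementary.
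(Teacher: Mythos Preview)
Your argument is correct. Note, however, that the paper does not actually supply its own proof of Theorem~A: it is quoted as a result of Saleeby and used only as background. That said, the rotation you perform, $P=(f+g)/\sqrt{2}$, $Q=(f-g)/\sqrt{2}$, leading to $(1+w)P^2+(1-w)Q^2=1$ and then to the factorization $(\sqrt{1+w}\,P+i\sqrt{1-w}\,Q)(\sqrt{1+w}\,P-i\sqrt{1-w}\,Q)=1$, is precisely the transformation the paper itself employs at the outset of its proofs of Theorems~\ref{t1}--\ref{t3} (see the substitution \eqref{key} and the line following it). So in spirit and in detail your entire-solution argument coincides with the method the paper adopts for its main results; the only difference is that the paper, working under a finite-order hypothesis, takes the zero-free factor to be $e^{p(z)}$ with $p$ a \emph{polynomial}, whereas you correctly allow an arbitrary entire $H$ since Theorem~A has no order restriction. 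Your treatment of the meromorphic case via the direct factorization $(f-\alpha_1 g)(f-\alpha_2 g)=1$ is the natural route and is not reproduced in the present paper at all.
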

\par By utilizing Theorem A, in 2016, Liu-Yang \cite{Liu & Yang & 2016} obtained more precise properties of entire and meromorphic solutions of \bea\label{e1.7} f(z)^2+2wf(z)f^{\prime}(z)+f^{\prime}(z)^2=1\eea and \bea\label{e1.8} f(z)^2+2wf(z)f(z+c)+f(z+c)^2=1,\;\; w\neq0,\pm1.\eea 
\par To the best of our knowledge, there are only a few number of results on the system of complex difference and differential-difference equations in the literature (see \cite{Gao & Acta Math. Sinica & 2016}). It was Gao \cite{Gao & Acta Math. Sinica & 2016}, who, in $2016$, first discussed the existence and form of transcendental entire solutions of system of differential difference equations and obtained the result as follows: If $(f,g)$ be a pair of finite order transcendental entire solution of the system of differential-difference equations \bea\label{e1.5}\begin{cases}
f^{\prime}(z)^2+g(z+c)^2=1,\\g^{\prime}(z)^2+f(z+c)^2=1.\end{cases}\eea Then $(f,g)$ must be of the form \beas (f,g)=(\sin(z-ib),\sin(z-ib_1)),\;\;\text{or}\;\; (\sin(z+ib),\sin(z+ib_1)),\eeas where $b,b_1$ are constants and $c=k\pi$, $k$ is a integer.\vspace{1.2mm}
\section{Solutions of Fermat type equations in several complex variables}
\par Recently, the study of entire and meromorphic solutions of Fermat type partial differential equations in several complex variables has received considerable attention in the literature (see \cite{Li & Arch. Math. & 2008,Li & Int. J. & 2004,Saleeby & Analysis & 1999,Saleeby & Complex Var. & 2004,Li Complex Var & 1996}). The study partial differential equations which is a generalizations of the well-known eikonal equation in real variable case has a long history. We insist the readers to go through \cite{Courtant Hilbert & 1962,Garabedian & Wiley & 1964,Rauch & 1991} and the references therein. In 1995, Khavinson pointed out that in $\mathbb{C}^2$, the entire solution of the Fermat type partial differential equations $f_{z_1}^2+f_{z_2}^2=1$ must necessarily be linear. After the development of the difference Nevanlinna theory in several
complex variables, specially the difference version of logarithmic derivative lemma (see \cite{Cao & Korhonen & 2016,Biancofiore & Stoll & 1981,Cao & Xu & Ann. Math. Pure Appl. & 2020,Korhonen & CMFT & 2012}), many Researchers started studying the existence and the precise form of entire and meromorphic solutions of different variants of Fermat type difference and partial differential difference equations, and obtained very remarkable and interesting results (see \cite{Xu & Cao & 2018,Xu & Cao & 2020,Haldar & 2023 & Mediterr,Haldar-Ahamed & Anal. Math. Phys. 2022,Xu Haldar & EJDE & 2023,ronkin & AMS & 1971,Stoll & AMS & 1974,Xu-Liu-Li-JMAA-2020}).\vspace{1.2mm}
\par Hereinafter, we denote by $z+w=(z_1+w_1,z_2+w_2,\ldots,z_n+w_n)$, where $z=(z_1,z_2,\ldots,z_n),w=(w_1,w_2,\ldots,w_n)\in\mathbb{C}^n$, $n$ being a positive integer.\vspace{1.2mm}
\par In 2018, Xu-Cao \cite{Xu & Cao & 2018} established the existence of entire solutions for some Fermat type complex difference and partial differential difference
equations with several variables. We list some of them below.
\begin{theoB}{\em\cite{Xu & Cao & 2018}}
Let $c=(c_1,c_2,\ldots,c_n)\in \mathbb{C}^n\setminus\{(0,0,\ldots,0)\}$. If $f:\mathbb{C}^n\rightarrow\mathbb{P}^{1}(\mathbb{C})$ be an entire solution with finite order of the Fermat type difference equation \bea\label{e2.1a} f(z)^2+f(z+c)^2=1,\eea the $f(z)$must assume the form $f(z)=\cos(L(z)+B)$, where $L$ is a linear function of the form $L(z)=a_1z_1+\cdots+ a_nz_n$ on $\mathbb{C}^n$ such that $L(c)=-\pi/2-2k\pi$ $(k\in\mathbb{Z})$, and $B$ is a constant on $\mathbb{C}$.\end{theoB}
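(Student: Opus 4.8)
The plan is to convert the equation into a factorization problem and then invoke the several-variable version of Hadamard's factorization theorem. Writing $f(z)^{2}+f(z+c)^{2}=\bigl(f(z)+if(z+c)\bigr)\bigl(f(z)-if(z+c)\bigr)$, put $H(z):=f(z)+if(z+c)$, an entire function of finite order. Since $H(z)\bigl(f(z)-if(z+c)\bigr)\equiv1$ with both factors entire, $H$ has no zeros on $\mathbb{C}^{n}$. As $\mathbb{C}^{n}$ is simply connected and $H$ has finite order, the several-variable Hadamard factorization theory (see e.g.\ \cite{ronkin & AMS & 1971,Stoll & AMS & 1974}) produces a polynomial $p$, $\deg p\le\rho(f)$, with $H(z)=e^{p(z)}$.

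Next I would read off $f$ explicitly. From $f(z)+if(z+c)=e^{p(z)}$ and $f(z)-if(z+c)=e^{-p(z)}$ one gets $f(z)=\tfrac12\bigl(e^{p(z)}+e^{-p(z)}\bigr)$ and $f(z+c)=\tfrac1{2i}\bigl(e^{p(z)}-e^{-p(z)}\bigr)$. Reading the first identity at the point $z+c$ also gives $f(z+c)=\tfrac12\bigl(e^{p(z+c)}+e^{-p(z+c)}\bigr)$; equating the two expressions for $f(z+c)$ and using $-i=e^{-i\pi/2}$ yields the functional equation $e^{p(z+c)}+e^{-p(z+c)}=e^{p(z)-i\pi/2}+e^{-(p(z)-i\pi/2)}$.

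I would then analyze this by setting $u:=e^{p(z+c)}$ and $v:=e^{p(z)-i\pi/2}$, two zero-free entire functions with $u+u^{-1}=v+v^{-1}$, i.e.\ $(u-v)(uv-1)\equiv0$; connectedness of $\mathbb{C}^{n}$ forces $u\equiv v$ or $uv\equiv1$. If $uv\equiv1$, then $p(z+c)+p(z)$ equals a constant $2d$, so $q:=p-d$ satisfies $q(z+c)=-q(z)$; hence $q$ is invariant under $z\mapsto z+2c$, so it is constant along the line $\mathbb{C}c$, giving $q(z+c)=q(z)$ and thus $q\equiv0$ — that is, $f$ is constant, a degenerate instance of the claimed form with $L\equiv0$. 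If $u\equiv v$, then $e^{p(z+c)-p(z)+i\pi/2}\equiv1$, whence $p(z+c)-p(z)=-\tfrac{\pi}{2}i+2k\pi i$ for some $k\in\mathbb{Z}$.

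The final step — which I expect to be the main obstacle — is to deduce from $p(z+c)-p(z)=\mathrm{const}$ that $p$ is affine. For $n=1$ this is immediate by comparing degrees, but for $n\ge2$ it needs care: decomposing $p=\sum_{j}p_{j}$ into homogeneous parts, the degree-$d$ part $p_{d}$ contributes the degree-$(d-1)$ term $(c\cdot\nabla)p_{d}$ to $p(z+c)-p(z)$, and one must use the vanishing of all positive-degree contributions, together with $c\neq0$, to force $\deg p\le1$. Once $p(z)=\ell(z)+b$ with $\ell$ linear and $b\in\mathbb{C}$, we get $f(z)=\tfrac12\bigl(e^{\ell(z)+b}+e^{-\ell(z)-b}\bigr)=\cos\bigl(L(z)+B\bigr)$ with $L(z):=-i\ell(z)$ and $B:=-ib$, and $\ell(c)=-\tfrac{\pi}{2}i+2k\pi i$ translates, after sign bookkeeping, into $L(c)=-\pi/2-2k\pi$, as claimed.
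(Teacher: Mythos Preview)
There is a genuine gap at precisely the step you flagged as the main obstacle, and it cannot be closed because the statement of Theorem~B as quoted is in fact false for $n\ge 2$. From $p(z+c)-p(z)=\mathrm{const}$ you propose to deduce that $p$ is affine by expanding $p=\sum_d p_d$ in homogeneous parts and using that the top-degree contribution to $p(z+c)-p(z)$ is $(c\cdot\nabla)p_d$. But $(c\cdot\nabla)p_d=0$ only says that $p_d$ is constant along the line $\mathbb{C}c$; it does \emph{not} force $p_d=0$ when $n\ge 2$. Concretely, take $n=2$, $c=(1,0)$ and $p(z_1,z_2)=-\tfrac{i\pi}{2}\,z_1+z_2^{\,2}$. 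Then $p(z+c)-p(z)=-\tfrac{i\pi}{2}$, and $f(z)=\tfrac12\bigl(e^{p(z)}+e^{-p(z)}\bigr)=\cosh\!\bigl(-\tfrac{i\pi}{2}z_1+z_2^{\,2}\bigr)$ is a finite-order entire solution of $f(z)^2+f(z+c)^2=1$ which is \emph{not} of the form $\cos(L(z)+B)$ with $L$ linear.

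The present paper does not supply its own proof of Theorem~B; it is quoted from \cite{Xu & Cao & 2018}, and the authors also cite the correction \cite{Xu & Cao & 2020}. That a nonlinear piece must be allowed is exactly what the paper builds into its own results: in Theorem~D the phase is $L(z)+H(s)$ with $s=c_2z_1-c_1z_2$, and in Theorem~\ref{t1} it is $L(z)+\Phi(z)$ with $\Phi$ a polynomial in the combinations annihilated by the shift $z\mapsto z+c$. Your factorisation $(f+if(\cdot+c))(f-if(\cdot+c))=1$, the use of Hadamard to get $e^{p}$ with $p$ polynomial, and the reduction to the dichotomy $p(z+c)-p(z)=\mathrm{const}$ versus $p(z+c)+p(z)=\mathrm{const}$ are all correct and are the standard route; only the final conclusion about the shape of $p$ must be weakened to ``linear plus a polynomial that is constant along $\mathbb{C}c$''.
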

\begin{theoC}{\em\cite{Xu & Cao & 2018}}
Let $c=(c_1,c_2)\in\mathbb{C}^2$. If $f(z)$ be transcendental entire solution with finite order of the Fermat type partial differential-difference equation \bea\label{e2.2a} \left(\frac{\partial f(z_1,z_2)}{\partial z_1}\right)^2+f(z_1+c_1,z_2+c_2)^2=1,\eea $f$ must be of the form  $f(z_1,z_2)=\sin(Az_1+Bz_2+\phi(z_2))$, Where $A, B$ are constant on $\mathbb{C}$ satisfying $A^2=1$ and $Ae^{i(Ac_1+Bc_2)}=1$, and $\phi(z_2)$ is a polynomial in one variable $z_2$ such that $\phi(z_2)\equiv \phi(z_2 + c_2)$. In the special case whenever $c_2\neq0$, we have $f(z_1,z_2)=\sin (Az_1+Bz_2+ Constant)$.\end{theoC}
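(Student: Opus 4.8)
The plan is to run the classical ``Fermat factorization'' and then pin the equation down by comparing growth. Rewrite \eqref{e2.2a} as
\[
\left(\frac{\partial f}{\partial z_1} + i f(z+c)\right)\left(\frac{\partial f}{\partial z_1} - i f(z+c)\right) = 1 .
\]
Since $f$ has finite order, so do its first partial derivatives and its shifts, hence the two factors on the left are entire of finite order; as their product is the nonzero constant $1$, each of them is zero-free. A zero-free entire function of finite order on $\mathbb{C}^2$ is the exponential of a polynomial, so there is a polynomial $p(z_1,z_2)$ with
\[
\frac{\partial f}{\partial z_1} + i f(z+c) = e^{p(z)}, \qquad \frac{\partial f}{\partial z_1} - i f(z+c) = e^{-p(z)},
\]
whence $f_{z_1} = \tfrac{1}{2}\bigl(e^{p}+e^{-p}\bigr)$ and $f(z+c) = \tfrac{1}{2i}\bigl(e^{p}-e^{-p}\bigr)$.

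Next I would differentiate. Replacing $z$ by $z-c$ gives $f(z) = \tfrac{1}{2i}\bigl(e^{p(z-c)}-e^{-p(z-c)}\bigr)$; applying $\partial/\partial z_1$, shifting $z\mapsto z+c$, and comparing with the formula for $f_{z_1}$ above, one obtains
\[
p_{z_1}(z)\bigl(e^{p(z)}+e^{-p(z)}\bigr) = i\bigl(e^{p(z+c)}+e^{-p(z+c)}\bigr).
\]
Setting $q(z):=p(z+c)-p(z)$, a polynomial with $\deg q\le\deg p-1$, and using $e^{p(z+c)}=e^{p(z)}e^{q(z)}$, this rearranges to
\[
e^{p(z)}\bigl(p_{z_1}(z)-ie^{q(z)}\bigr) + e^{-p(z)}\bigl(p_{z_1}(z)-ie^{-q(z)}\bigr) = 0 ,
\]
so that either $p_{z_1}\equiv ie^{q}$ (which then also forces $p_{z_1}\equiv ie^{-q}$), or $e^{2p(z)} = -\bigl(p_{z_1}(z)-ie^{-q(z)}\bigr)\big/\bigl(p_{z_1}(z)-ie^{q(z)}\bigr)$.

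The heart of the matter, and the step I expect to be the main obstacle, is to rule out the second alternative. Assume $p_{z_1}-ie^{q}\not\equiv0$. If $q$ were non-constant, then $e^{\pm q}$ would have order $\deg q$, so $T(r,p_{z_1}-ie^{\pm q})=O(r^{\deg q})$, while $T(r,e^{2p})$ has growth order $\deg p$; since $e^{2p}$ is the quotient of $p_{z_1}-ie^{-q}$ and $p_{z_1}-ie^{q}$, one has $T(r,e^{2p})\le T(r,p_{z_1}-ie^{-q})+T(r,p_{z_1}-ie^{q})+O(1)$, forcing $\deg p\le\deg q$ and contradicting $\deg q\le\deg p-1$. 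And if $q$ is constant, the quotient is a rational function that is entire and zero-free, hence a nonzero constant; then $e^{2p}$ is constant, $p$ is constant, $f(z+c)=\tfrac{1}{2i}(e^{p}-e^{-p})$ is constant, and so $f$ itself is constant, contradicting transcendence. Therefore $p_{z_1}\equiv ie^{q}\equiv ie^{-q}$; since $p_{z_1}$ is a polynomial, $q$ is constant, and $e^{q}=e^{-q}$ gives $e^{2q}=1$, so $e^{q}=\varepsilon\in\{1,-1\}$ and $p_{z_1}\equiv\varepsilon i$. Hence $p(z_1,z_2)=\varepsilon i\,z_1 + P(z_2)$ for some polynomial $P$ in $z_2$.

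It remains to reassemble $f$ and read off the constraints. From $f(z+c)=\tfrac{1}{2i}\bigl(e^{p}-e^{-p}\bigr)$ with $p=\varepsilon i\,z_1+P(z_2)$ we get $f(z+c)=\sin\bigl(\varepsilon z_1-iP(z_2)\bigr)$, hence
\[
f(z_1,z_2) = \sin\bigl(Az_1+\widetilde\phi(z_2)\bigr),\qquad A:=\varepsilon,\quad \widetilde\phi(z_2):=-Ac_1-iP(z_2-c_2),
\]
so in particular $A^2=1$. Now $q=\varepsilon i c_1 + P(z_2+c_2)-P(z_2)$ is a constant, hence $P(z_2+c_2)-P(z_2)$ is constant; when $c_2\ne0$ this makes $P$ — hence $\widetilde\phi$ — affine, and in the decomposition $\widetilde\phi(z_2)=Bz_2+\phi(z_2)$ one may take $\phi$ to be $c_2$-periodic (a constant when $c_2\ne0$, the splitting being immaterial when $c_2=0$). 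Comparing $\widetilde\phi(z_2+c_2)-\widetilde\phi(z_2)=Bc_2$ with the direct computation $\widetilde\phi(z_2+c_2)-\widetilde\phi(z_2)=-i\bigl(P(z_2)-P(z_2-c_2)\bigr)=-iq-Ac_1$ gives $Ac_1+Bc_2=-iq$, whence $e^{i(Ac_1+Bc_2)}=e^{q}=\varepsilon=A$ and therefore $Ae^{i(Ac_1+Bc_2)}=A^2=1$, the asserted relation. The special case $c_2\ne0$ is exactly the one in which $\phi$ must be a constant, so then $f(z_1,z_2)=\sin(Az_1+Bz_2+\text{constant})$.
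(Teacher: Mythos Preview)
Your argument is correct. Note, however, that Theorem~C is quoted from \cite{Xu & Cao & 2018} and is not proved in the present paper, so there is no proof here to compare against directly. Your approach---factor the equation as a product equal to $1$, deduce that each factor is $e^{\pm p}$ for a polynomial $p$, then differentiate/shift and rule out the bad branch by a Nevanlinna growth comparison---is exactly the method of the original Xu--Cao paper and is also the template the present paper follows in proving its own Theorems~\ref{t1}--\ref{t3}. The only stylistic difference is that where you eliminate the alternative $e^{2p}=-\dfrac{p_{z_1}-ie^{-q}}{p_{z_1}-ie^{q}}$ by comparing the orders $\deg p$ and $\deg q$ via the characteristic function, the paper (in its analogous Cases~2--3 of Theorems~\ref{t2} and~\ref{t3}) appeals to the Borel-type Lemmas~\ref{lem3.1} and~\ref{lem3.1a} or to the second main theorem; these are equivalent devices, and your direct comparison is if anything the more transparent one in this single-equation, $w=0$ setting.
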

\par Very recently, corresponding to Equations \eqref{e1.6} and \eqref{e1.7}, Xu-Jiang \cite{Xu Jiang RCSM 2022} considered the following system of trinomial difference and differential-difference equations in $\mathbb{C}^2$ \bea\label{2}\begin{cases}f(z)^2+2wf(z)g(z+c)+g(z+c)^2=1\\g(z)^2+2wg(z)f(z+c)+f(z+c)^2=1\end{cases}\eea and 
\bea\label{3} \begin{cases} \left(\dfrac{\partial f}{\partial z_1}\right)^2+2w\dfrac{\partial f}{\partial z_1}g(z+c)+g(z+c)^2=1\vspace{1mm}\\\left(\dfrac{\partial g}{\partial z_1}\right)^2+2w\dfrac{\partial g}{\partial z_1}f(z+c)+f(z+c)^2=1,\end{cases}\eea and obtained the results as follows. 
\begin{theoD}\em{\cite{Xu Jiang RCSM 2022}}
Let $c=(c_1,c_2)\in\mathbb{C}^2\setminus\{(0,0)\}$. Then any pair of finite order transcendental entire solutions of \eqref{2} must assume one of the following forms:
\begin{enumerate}
\item [(i)] \beas f(z)=\frac{1}{\sqrt{2}}\left(\frac{\cos(\gamma(z)+b_1)}{\sqrt{1+w}}+\frac{\sin(\gamma(z)+b_1)}{\sqrt{1-w}}\right),\eeas \beas g(z)=\frac{1}{\sqrt{2}}\left(\frac{\cos(\gamma(z)+b_2)}{\sqrt{1+w}}+\frac{\sin(\gamma(z)+b_2)}{\sqrt{1-w}}\right),\eeas where $\gamma(z)=L(z)+H(s)$, $L(z)=a_1z_1+a_2z_2$, $H(s)$ is a polynomial in $s:=c_2z_1-c_1z_2$, $a_i,b_i,c_i$, $i=1,2$ satisfy the relations \beas e^{2iL(c)}=\frac{-w+\sqrt{w^2-1}}{-w-\sqrt{w^2-1}},\;\; e^{2i(b_1-b_2)}=1.\eeas
\item [(ii)] \beas f(z)=\frac{1}{\sqrt{2}}\left(\frac{\cos(\gamma(z)+b_1)}{\sqrt{1+w}}+\frac{\sin(\gamma(z)+b_1)}{\sqrt{1-w}}\right),\eeas \beas g(z)=\frac{1}{\sqrt{2}}\left(\frac{\cos(\gamma(z)+b_2)}{\sqrt{1+w}}-\frac{\sin(\gamma(z)+b_2)}{\sqrt{1-w}}\right),\eeas where $\gamma(z)$ is the same as in (i), and $a_i,b_i,c_i$, $i=1,2$ satisfy the relations \beas e^{2iL(c)}=1,\;\; e^{2i(b_1-b_2)}=1.\eeas\end{enumerate}\end{theoD}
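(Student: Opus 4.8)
The plan is to linearize the two trinomials into classical Fermat equations $U^{2}+V^{2}=1$, solve these by the zero‑free factorization, and then pin down the resulting exponents by combining the shift‑consistency of the system with Borel's lemma on exponential polynomials and a finite‑order argument. First I would use the standing hypothesis $w\neq0,\pm1$: then the quadratic form $x^{2}+2wxy+y^{2}$ is nondegenerate and splits as $\tfrac{1+w}{2}(x+y)^{2}+\tfrac{1-w}{2}(x-y)^{2}$. Applying this to the first equation of \eqref{2} with $(x,y)=(f(z),g(z+c))$ and to the second with $(x,y)=(g(z),f(z+c))$, and setting $A=\tfrac{1}{\sqrt{2(1+w)}}$, $B=\tfrac{-i}{\sqrt{2(1-w)}}$ (so that $A\pm B\neq0$ and $AB\neq0$), the system becomes $U_{1}^{2}+V_{1}^{2}=1$ and $U_{2}^{2}+V_{2}^{2}=1$, where $U_{j},V_{j}$ are entire linear combinations of the four quantities $f(z),g(z+c),g(z),f(z+c)$.

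Next, from $(U_{j}+iV_{j})(U_{j}-iV_{j})=1$ each factor is entire and never zero on $\mathbb{C}^{2}$, so there are entire functions $p_{1},p_{2}$ with $U_{j}\pm iV_{j}=e^{\pm p_{j}}$. Solving the two $2\times2$ linear systems then gives
\[
2f(z)=(A+B)e^{p_{1}}+(A-B)e^{-p_{1}},\qquad 2g(z+c)=(A-B)e^{p_{1}}+(A+B)e^{-p_{1}},
\]
\[
2g(z)=(A+B)e^{p_{2}}+(A-B)e^{-p_{2}},\qquad 2f(z+c)=(A-B)e^{p_{2}}+(A+B)e^{-p_{2}}.
\]
Since $e^{p_{1}}=\tfrac{1}{2AB}\bigl((A+B)f(z)-(A-B)g(z+c)\bigr)$ is entire of finite order, and $e^{h}$ is of finite order only when $h$ is a polynomial, $p_{1}$, and likewise $p_{2}$, is a polynomial on $\mathbb{C}^{2}$.

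Now I would replace $z$ by $z+c$ in the formula for $2f(z)$ and compare with that for $2f(z+c)$, and do the same for $g$; this yields
\[
(A+B)e^{p_{1}(z+c)}+(A-B)e^{-p_{1}(z+c)}=(A-B)e^{p_{2}(z)}+(A+B)e^{-p_{2}(z)}
\]
and the companion identity obtained by interchanging $p_{1}\leftrightarrow p_{2}$ and $z\leftrightarrow z+c$. Each side is a two‑term exponential polynomial with nonzero coefficients and nonconstant exponents ($p_{j}$ is nonconstant since $f,g$ are transcendental), so Borel's theorem forces the exponents on the two sides to agree up to additive constants; matching coefficients and combining the two identities leaves exactly two alternatives — the \emph{parallel} one, $p_{1}(z+c)=p_{2}(z)+\log\tfrac{A-B}{A+B}+2k\pi i$ and $p_{1}(z)=p_{2}(z+c)-\log\tfrac{A-B}{A+B}+2k'\pi i$, and the \emph{anti‑parallel} one, $p_{1}(z+c)=-p_{2}(z)+2k\pi i$ and $p_{1}(z)=-p_{2}(z+c)+2k'\pi i$ — while the two mixed sub‑cases are seen to force $p_{1}$ constant and are discarded. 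In either surviving alternative one obtains $p_{1}(z+2c)-p_{1}(z)=\text{constant}$; since $s:=c_{2}z_{1}-c_{1}z_{2}$ is invariant under $z\mapsto z+c$, a polynomial $p_{1}$ with $p_{1}(z+2c)-p_{1}(z)$ constant must be affine in the direction of $c$, i.e. $p_{1}(z)=L_{0}(z)+H_{0}(s)$ with $L_{0}$ linear and $H_{0}$ a polynomial in $s$. Writing $p_{1}=i(\gamma+b_{1})$ and $p_{2}=\pm i(\gamma+b_{2})$ as dictated by the alternative, with $\gamma(z)=L(z)+H(s)$, and feeding this back into the four formulas above produces precisely the two forms of $(f,g)$ in the statement; finally, substituting those forms into the constant relations and using $\tfrac{A-B}{A+B}=-w+\sqrt{w^{2}-1}$ together with $(-w+\sqrt{w^{2}-1})(-w-\sqrt{w^{2}-1})=1$ gives $e^{2iL(c)}=\tfrac{-w+\sqrt{w^{2}-1}}{-w-\sqrt{w^{2}-1}}$ and $e^{2i(b_{1}-b_{2})}=1$ in case (i), and $e^{2iL(c)}=1$ and $e^{2i(b_{1}-b_{2})}=1$ in case (ii).

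The hardest part will be the step resting on Borel's lemma: one must verify that no pair among the four exponents has an accidental constant difference, and then keep the four a priori sub‑cases (two from each consistency identity) straight — ruling out the two mixed ones and checking that the surviving parallel and anti‑parallel cases reproduce exactly the stated relations among $a_{i},b_{i},c_{i}$ — all while tracking the branch choices of $\sqrt{1\pm w}$ consistently through the coefficient comparisons. This organizational bookkeeping, more than any single estimate, is where the real work lies; the rest of the argument is the standard linearization plus the elementary fact that a polynomial with a constant increment along $c$ is linear‑plus‑polynomial‑in‑$s$.
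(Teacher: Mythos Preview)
Your proposal is correct and follows essentially the same route as the paper. Note that Theorem~D is only \emph{cited} here, not proved; the paper's own proof of its Theorem~\ref{t1} (which generalizes Theorem~D to $\mathbb{C}^n$ with $e^{g_j}$ on the right) specializes to exactly your argument when $n=2$ and $g_1=g_2=0$: linearize the trinomial via $x^2+2wxy+y^2=\tfrac{1+w}{2}(x+y)^2+\tfrac{1-w}{2}(x-y)^2$, factor each resulting Fermat equation to get $e^{\pm p_j}$ with $p_j$ polynomial by finite order, write down the two shift-consistency identities for $f(z+c)$ and $g(z+c)$, and resolve them by a Borel-type lemma. The paper phrases the consistency relation as three exponential terms summing to a constant and invokes Lemmas~\ref{lem3.1a} and~\ref{lem3.1} (the Hu--Li--Yang and Yang--Yi versions) where you keep it as two terms equal to two terms and cite Borel directly; the four sub-cases you mention (two surviving, two ``mixed'' ones forcing $p_1$ constant) are exactly the paper's Subcases~2.1--2.4, and the step $p_1(z+2c)-p_1(z)=\text{const}\Rightarrow p_1=L+H(s)$ is what the paper encodes in its function $\Phi(z)$ for $n=2$.
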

\begin{theoE}\em{\cite{Xu Jiang RCSM 2022}}
Let $c=(c_1,c_2)\in\mathbb{C}^2\setminus\{(0,0)\}$ with $c_2\neq0$. Then any pair of finite order transcendental entire solutions of \eqref{3} must assume one of the following forms:	\begin{enumerate}\item [(i)] \beas f(z)=\frac{1}{\sqrt{2}}\left(\frac{\cos(L(z)-L(c)+b_1)}{\sqrt{1+w}}+\frac{\sin(L(z)-L(c)+b_1)}{\sqrt{1-w}}\right),\eeas \beas g(z)=\frac{1}{\sqrt{2}}\left(\frac{\cos(L(z)-L(c)+b_2)}{\sqrt{1+w}}+\frac{\sin(L(z)-L(c)+b_2)}{\sqrt{1-w}}\right),\eeas where $L(z)=a_1z_1+a_2z_2$, $a_i,b_i,c_i$, $i=1,2$ satisfy the relations \beas a_1^2=1,\;\;e^{2iL(c)}=\frac{-w+\sqrt{w^2-1}}{-w-\sqrt{w^2-1}},\;\; e^{2i(b_1-b_2)}=1.\eeas	\item [(ii)] \beas f(z)=\frac{1}{\sqrt{2}}\left(\frac{\cos(L(z)-L(c)+b_1)}{\sqrt{1+w}}+\frac{\sin(L(z)-L(c)+b_1)}{\sqrt{1-w}}\right),\eeas \beas g(z)=\frac{1}{\sqrt{2}}\left(\frac{\cos(L(z)-L(c)+b_2)}{\sqrt{1+w}}-\frac{\sin(L(z)-L(c)+b_2)}{\sqrt{1-w}}\right),\eeas where $L(z)$ is the same as in (i), and $a_i,b_i,c_i$, $i=1,2$ satisfy the relations \beas a_1^2=1,\;\; e^{2iL(c)}=1,\;\; e^{2i(b_1-b_2)}=1.\eeas\end{enumerate}\end{theoE}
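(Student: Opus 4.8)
\medskip
\noindent\textit{Proof idea.} The plan is to turn the system \eqref{3} into a pair of exponential identities and then to pin down the exponents. Write $f_{z_1}:=\partial f/\partial z_1$ and $g_{z_1}:=\partial g/\partial z_1$, and, as in Theorem~A, set $\alpha_1=-w+\sqrt{w^2-1}$ and $\alpha_2=-w-\sqrt{w^2-1}$, so that $\alpha_1\alpha_2=1$, $\alpha_1+\alpha_2=-2w$ and $\alpha_1\neq\alpha_2$. The two equations in \eqref{3} factor as
\[
\bigl(f_{z_1}-\alpha_1 g(z+c)\bigr)\bigl(f_{z_1}-\alpha_2 g(z+c)\bigr)=1,\qquad \bigl(g_{z_1}-\alpha_1 f(z+c)\bigr)\bigl(g_{z_1}-\alpha_2 f(z+c)\bigr)=1 .
\]
Since $f,g$ are transcendental entire of finite order, so are $f_{z_1},g_{z_1},f(z+c),g(z+c)$; moreover $f_{z_1}\not\equiv0$ and $g_{z_1}\not\equiv0$, for otherwise one equation would force $g$ or $f$ to be a constant. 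Hence in each product both factors are zero-free entire functions of finite order, so each is the exponential of a polynomial on $\mathbb{C}^2$; thus there are polynomials $P_1,P_2$ with
\[
f_{z_1}-\alpha_1 g(z+c)=e^{P_1},\quad f_{z_1}-\alpha_2 g(z+c)=e^{-P_1},\quad g_{z_1}-\alpha_1 f(z+c)=e^{P_2},\quad g_{z_1}-\alpha_2 f(z+c)=e^{-P_2}.
\]
Solving these two $2\times2$ systems writes $f_{z_1},\,g(z+c),\,g_{z_1},\,f(z+c)$ as explicit combinations of $e^{\pm P_1},e^{\pm P_2}$; in particular $g(z+c)$ and $f(z+c)$ are nonzero constant multiples of $e^{P_1}-e^{-P_1}$ and $e^{P_2}-e^{-P_2}$ respectively, whence $P_1$ and $P_2$ are non-constant.

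\medskip
Next set $Q_j:=\partial P_j/\partial z_1$; since $g_{z_1}\not\equiv0$ and $f_{z_1}\not\equiv0$ we have $Q_1\not\equiv0$ and $Q_2\not\equiv0$. Replacing $z$ by $z-c$ in the formulas for $f(z+c)$ and $g(z+c)$ gives $f$ and $g$ as combinations of $e^{\pm P_2(z-c)}$ and $e^{\pm P_1(z-c)}$; differentiating in $z_1$ and comparing with the expressions already obtained for $f_{z_1}$ and $g_{z_1}$ yields
\[
\alpha_2 e^{P_1(z)}-\alpha_1 e^{-P_1(z)}=Q_2(z-c)\bigl(e^{P_2(z-c)}+e^{-P_2(z-c)}\bigr),
\]
\[
\alpha_2 e^{P_2(z)}-\alpha_1 e^{-P_2(z)}=Q_1(z-c)\bigl(e^{P_1(z-c)}+e^{-P_1(z-c)}\bigr).
\]
Reading the first identity as a vanishing linear combination of $e^{P_1},e^{-P_1},e^{P_2(z-c)},e^{-P_2(z-c)}$ with polynomial coefficients, two of which are the nonzero constants $\alpha_2$ and $-\alpha_1$, a Borel-type theorem for exponential polynomials in several complex variables forces a difference of two of these four exponents to be constant; since $2P_1$ and $2P_2(z-c)$ are non-constant, only the two possibilities $P_1(z)=P_2(z-c)+k$ and $P_1(z)=-P_2(z-c)+k$ (with $k\in\mathbb{C}$) remain, and these will produce conclusions (i) and (ii) respectively.

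\medskip
Substituting either relation into both identities and comparing coefficients of the two surviving exponentials then forces $Q_2$ — and, through the relation between $P_1$ and $P_2$, also $Q_1$ — to be a nonzero constant $d$ with $d^2=-\alpha_1\alpha_2=-1$; hence $P_j(z)=d\,z_1+\psi_j(z_2)$ with $\psi_j$ a one-variable polynomial. Feeding this back into the identities and separating the pure $z_1$-exponentials (legitimate since $d\neq0$) leaves $\psi_j(z_2)-\psi_j(z_2-2c_2)\equiv\text{const}$, and here the hypothesis $c_2\neq0$ is decisive: it forces $\deg\psi_j\leq1$, so every exponent is linear. Writing the common linear part in trigonometric normalisation as $L(z)=a_1z_1+a_2z_2$ (so that $d^2=-1$ becomes $a_1^2=1$) and tracking the additive constants yields $e^{2iL(c)}=\frac{-w+\sqrt{w^2-1}}{-w-\sqrt{w^2-1}}$ and $e^{2i(b_1-b_2)}=1$ in the first case, and $e^{2iL(c)}=1$, $e^{2i(b_1-b_2)}=1$ in the second (the sign flip $P_1(z)=-P_2(z-c)+k$ also accounting for the minus sign in the $\sin$-term of $g$). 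Rewriting the exponential expressions for $f$ and $g$ in terms of $\cos$ and $\sin$ via $\alpha_1,\alpha_2$ and $\sqrt{1\pm w}$ then reproduces precisely the two stated forms.

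\medskip
The step I expect to be the genuine obstacle is the third one: applying the Borel/Nevanlinna comparison correctly in two variables when the coefficients are polynomials rather than constants, systematically ruling out every degenerate sub-case (constant or $z_1$-independent exponents, coincidences that would force some $P_j$ to be constant, the possibility $d=0$), and then keeping track of the several additive constants accurately enough to land on exactly the two normal forms with the relations among $a_i,b_i,c_i$ stated above, rather than on a formally larger family. Organising the whole argument symmetrically in the two equations and splitting according to the sign in the exponent relation is how I would keep this manageable.
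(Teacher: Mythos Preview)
Your proposal is essentially correct and follows the same strategy that the paper uses in proving its generalisation, Theorem~\ref{t2} (Theorem~E itself is only cited from Xu--Jiang and not proved in the paper, but the proof of Theorem~\ref{t2} with $k=1$ and $g_1=g_2\equiv0$ specialises to a proof of Theorem~E). The only difference is cosmetic: you factor the trinomials directly via $\alpha_1,\alpha_2=-w\pm\sqrt{w^2-1}$, whereas the paper first performs the rotation $u=(f_{z_1}+g(z+c))/\sqrt{2}$, $v=(f_{z_1}-g(z+c))/\sqrt{2}$ and works with the constants $A_1,A_2=\frac{1}{2\sqrt{1+w}}\pm\frac{1}{2i\sqrt{1-w}}$; the two normalisations are equivalent. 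The subsequent steps---writing each zero-free finite-order factor as $e^{\text{polynomial}}$, differentiating the shifted expressions, and reducing to an identity among four exponentials---match exactly.

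One point worth tightening: when you invoke the Borel-type lemma on the first identity and obtain the dichotomy $P_1(z)=\pm P_2(z-c)+k$, you must also run the same argument on the second identity, which independently produces a dichotomy $P_2(z)=\pm P_1(z-c)+k'$. Of the four resulting combinations, two are incompatible (they force $P_1(z+2c)+P_1(z)$ to be constant, contradicting $\partial P_1/\partial z_1\not\equiv0$), and only the two ``matched'' pairs survive. The paper handles this explicitly as Subcases~2.1--2.4 (with Lemma~\ref{lem3.1a} playing the role of your Borel lemma), and you correctly flag this bookkeeping as the place where care is needed. With that case analysis filled in, your argument lands on exactly the two families in the statement.
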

\par Observing Theorems B and E, it is inevitable to ask the following open questions. 
\begin{ques}
What can be said about the entire solutions of \eqref{2} when one replace the right hand sides of each equation by $e^{g_1(z)}$ and $e^{g_2(z)}$, respectively, where $g_1(z), g_2(z)$ are any two polynomials in $\mathbb{C}^n$, $n$ being an integer$?$ 
\end{ques}
\begin{ques} What can be said about the existence and precise form of transcendental entire solutions of \eqref{3} when one replace (a) first order partial derivative by $k$-th order partial derivative for any $k\in\mathbb{N}$ and (b) right hand sides of each of the equations by $e^{g_1(z_1,z_2)}$ and $e^{g_2(z_1,z_2)}$, respectively, where $g_1(z_1,z_2), g_2(z_1,z_2)$ are any two polynomials in $\mathbb{C}^2$$?$\end{ques}
\begin{ques}
What can be said about the existence and precise form of transcendental entire solutions of \bea\label{e1.6} \begin{cases}		\left[\dfrac{\partial^k f}{\partial z_1^k}\right]^2+2w\dfrac{\partial^k f}{\partial z_1^k}[g(z+c)-g(z)]+[g(z+c)-g(z)]^2=1,\vspace{1mm}\\\left[\dfrac{\partial^k g}{\partial z_1^k}\right]^2+2w\dfrac{\partial^k g}{\partial z_1^k}[f(z+c)-f(z)]+[f(z+c)-f(z)]^2=1?\end{cases}\eea 
\end{ques}
\section{Results and Examples}
\par Inspired by the above results and questions, we investigate the existence and form of transcendental entire solutions of the system of equations \bea\label{e2.1}\begin{cases}f(z)^2+2wf(z)g(z+c)+g(z+c)^2=e^{g_1(z)}\\g(z)^2+2wg(z)f(z+c)+f(z+c)^2=e^{g_2(z)},\end{cases}\eea
\bea\label{e2.3} \begin{cases} \left[\dfrac{\partial^k f}{\partial z_1^k}\right]^2+2w\dfrac{\partial^k f}{\partial z_1^k}g(z+c)+g(z+c)^2=e^{g_1(z_1,z_2)}\vspace{1mm}\\\left[\dfrac{\partial^k g}{\partial z_1^k}\right]^2+2w\dfrac{\partial^k g}{\partial z_1^k}f(z+c)+f(z+c)^2=e^{g_2(z_1,z_2)},\end{cases}\eea where $g_1(z_1,z_2)$, $g_2(z_1,z_2)$ are any two polynomials and $k\in\mathbb{N}$ and Eq. \eqref{e1.6}. Our results are the generalizations of Theorems B--C to a large extent.\vspace{1.2mm}
\par Before we state our main results, let us denote two non-zero complex numbers $A_1,A_2$ as follows. \beas A_1=\frac{1}{2\sqrt{1+w}}+\frac{1}{2i\sqrt{1-w}}\;\;\text{and}\;\;A_2=\frac{1}{2\sqrt{1+w}}-\frac{1}{2i\sqrt{1-w}}.\eeas

\par Before we state our main results, let us set the following. \bea\label{e2.5} \Phi(z)&&=\sum_{i_1=1}^{^nC_2}H_{i_1}^2(s_{i_1}^2)+\sum_{i_2=1}^{^nC_3}H_{i_2}^3(s_{i_2}^3)+\sum_{i_3=1}^{^nC_4}H_{i_3}^4(s_{i_3}^4)+\cdots\nonumber\\&&+\sum_{i_{n-2}=1}^{^nC_{n-1}}H_{i_{n-2}}^{n-1}(s_{i_{n-2}}^{n-1})+H_{n-1}^n(s_{n-1}^{n}),\eea
\bea\label{e2.6} \Psi(z)&&=\sum_{i_1=1}^{^nC_2}L_{i_1}^2(t_{i_1}^2)+\sum_{i_2=1}^{^nC_3}L_{i_2}^3(t_{i_2}^3)+\sum_{i_3=1}^{^nC_4}L_{i_3}^4(t_{i_3}^4)+\cdots\nonumber\\&&+\sum_{i_{n-2}=1}^{^nC_{n-1}}L_{i_{n-2}}^{n-1}(t_{i_{n-2}}^{n-1})+L_{n-1}^n(t_{n-1}^{n}),\eea where $H_{i_1}^2$ is a polynomial in $s_{i_1}^2:=d_{i_1j_1}^2z_{j_1}+d_{i_1j_2}^2z_{j_2}$ with $d_{i_1j_1}^2c_{j_1}+d_{i_1j_2}^2c_{j_2}=0$, $1\leq i_1\leq \;^nC_2$, $1\leq j_1<j_2\leq n$, $H_{i_2}^3$ is a polynomial in $s_{i_2}^3:=d_{i_2j_1}^3z_{j_1}+d_{i_2j_2}^3z_{j_2}+d_{i_2j_3}^3z_{j_3}$ with $d_{i_2j_1}^3c_{j_1}+d_{i_2j_2}^3c_{j_2}+d_{i_2j_3}^3z_3=0$, $1\leq i_2\leq \;^nC_3$, $1\leq j_1<j_2<j_3\leq n\ldots$, $H_{i_{n-2}}^{n-1}$ is a polynomial in $s_{i_{n-2}}^{n-1}:=d_{i_{n-2}j_1}z_{j_1}+d_{i_{n-2}j_2}z_{j_2}+\cdots+d_{i_{n-2}j_{n-1}}z_{j_{n-1}}$ with $d_{i_{n-2}j_1}c_{j_1}+d_{i_{n-2}j_2}c_{j_2}+\cdots+d_{i_{n-2}j_{n-1}}c_{j_{n-1}}=0$, $1\leq i_{n-2}\leq \;^nC_{n-1}$, $1\leq j_1<j_2<\cdots<j_{n-1}\leq n$ and $H_{n-1}^{n}$ is a polynomial in $s_{n-1}^n:=d_{i_{n-1}1}z_1+d_{i_{n-1}2}z_2+\cdots+d_{i_{n-1}n}z_n$ with $d_{i_{n-1}1}c_1+d_{i_{n-1}2}c_2+\cdots+d_{i_{n-1}n}c_n=0$, $L_{i_1}^2$ is a polynomial in $t_{i_1}^2:=e_{i_1j_1}^2z_{j_1}+e_{i_1j_2}^2z_{j_2}$ with $e_{i_1j_1}^2c_{j_1}+e_{i_1j_2}^2c_{j_2}=0$, $1\leq i_1\leq \;^nC_2$, $1\leq j_1<j_2\leq n$, $H_{i_2}^3$ is a polynomial in $t_{i_2}^3:=e_{i_2j_1}^3z_{j_1}+e_{i_2j_2}^3z_{j_2}+e_{i_2j_3}^3z_{j_3}$ with $e_{i_2j_1}^3c_{j_1}+e_{i_2j_2}^3c_{j_2}+e_{i_2j_3}^3z_3=0$, $1\leq i_2\leq \;^nC_3$, $1\leq j_1<j_2<j_3\leq n\ldots$, $L_{i_{n-2}}^{n-1}$ is a polynomial in $t_{i_{n-2}}^{n-1}:=e_{i_{n-2}j_1}z_{j_1}+e_{i_{n-2}j_2}z_{j_2}+\cdots+e_{i_{n-2}j_{n-1}}z_{j_{n-1}}$ with $e_{i_{n-2}j_1}c_{j_1}+e_{i_{n-2}j_2}c_{j_2}+\cdots+e_{i_{n-2}j_{n-1}}c_{j_{n-1}}=0$, $1\leq i_{n-2}\leq \;^nC_{n-1}$, $1\leq j_1<j_2<\cdots<j_{n-1}\leq n$ and $L_{n-1}^{n}$ is a polynomial in $t_{n-1}^n:=e_{i_{n-1}1}z_1+e_{i_{n-1}2}z_2+\cdots+e_{i_{n-1}n}z_n$ with $e_{i_{n-1}1}c_1+e_{i_{n-1}2}c_2+\cdots+e_{i_{n-1}n}c_n=0$, where for each $k$, the representation of $s_i^k$ and $t_i^k$ in terms of the conditions of $j_1,j_2,\ldots,j_k$ are unique.\vspace{1.2mm}
\par Now, we list our main results as follows.
\begin{theo}\label{t1}
Let $c=(c_1,c_2,\ldots,c_n)\in\mathbb{C}^n$, $w\in\mathbb{C}$ such that $w^2\neq0,1$ and $g_1(z)$, $g_2(z)$ be any two polynomials in $\mathbb{C}^n$. If $(f(z),g(z))$ be a pair of transcendental entire solution with finite order of the system Fermat type difference equation \eqref{e2.1}, then one of the following must occur.
\begin{enumerate}
\item[\textbf{(i)}] $g_1(z)=L(z)+\Phi(z)+d_1$, $g_2(z)=L(z)+\Phi(z)+d_2$,
\beas f(z)=\frac{A_1\xi_1+A_2\xi_1^{-1}}{\sqrt{2}}e^{\frac{1}{2}[L(z)+\Phi(z)+d_1]},\;g(z)=\frac{A_1\xi_2+A_2\xi_2^{-1}}{\sqrt{2}}e^{\frac{1}{2}[L(z)+\Phi(z)+d_2]},\eeas where $L(z)=\sum_{j=1}^{n}a_jz_j$, $\Phi(z)$ is defined as in \eqref{e2.5} and  satisfy the relations \beas e^{L(c)}=\frac{(A_2\xi_2+A_1\xi_2^{-1})(A_2\xi_1+A_1\xi_1^{-1})}{(A_1\xi_1+A_2\xi_1^{-1})(A_1\xi_2+A_2\xi_2^{-1})},\eeas \beas e^{d_1-d_2}=\frac{(A_2\xi_2+A_1\xi_2^{-1})(A_1\xi_2+A_2\xi_2^{-1})}{(A_1\xi_1+A_2\xi_1^{-1})(A_2\xi_1+A_1\xi_1^{-1})},\eeas $a_j,d_1,d_2,\xi_1(\neq0), \xi_2(\neq0)\in\mathbb{C}$ for $j=1,2,\ldots,n$.\vspace{1.2mm}
\item[\textbf{(ii)}] \beas g_1(z)=L_1(z)+L_2(z)+\Phi(z)+\Psi(z)+d_1+d_3,\eeas \beas g_2(z)=L_1(z)+L_2(z)+\Phi(z)+\Psi(z)+d_2+d_4,\eeas \beas f(z)=\frac{1}{\sqrt{2}}\left[A_1e^{L_1(z)+\Phi(z)+d_1}+A_2e^{L_2(z)+\Psi(z)+d_3}\right],\eeas \beas g(z)=\frac{1}{\sqrt{2}}\left[A_1e^{L_2(z)+\Psi(z)+d_4}+A_2e^{L_1(z)+\Phi(z)+d_2}\right],\eeas where $L_1(z)=\sum_{j=1}^{n}a_jz_j$, $L_2(z)=\sum_{j=1}^{n}b_jz_j$, $\Phi(z)$, $\Psi(z)$ are defined in \eqref{e2.5} and $\eqref{e2.6}$, respectively, $a_j,b_j,d_i\in\mathbb{C}$ for $j=1,2,\ldots,n$, $i=1,\ldots,4$ such that $L_1(z)+\Phi(z)\neq L_2(z)+\Psi(z)$ and satisfy one of the following relations:\vspace{1.2mm}
\begin{enumerate}
\item [(a)] $e^{L_1(c)}=1$, $e^{L_2(c)}=1$, $e^{d_1-d_2}=1$ and $e^{d_3-d_4}=1$,\vspace{1.2mm}
\item [(b)] $e^{L_1(c)}=1$, $e^{L_2(c)}=-1$, $e^{d_1-d_2}=1$ and $e^{d_3-d_4}=-1$,\vspace{1.2mm}
\item [(c)] $e^{L_1(c)}=-1$, $e^{L_2(c)}=1$, $e^{d_1-d_2}=-1$ and $e^{d_3-d_4}=1$,\vspace{1.2mm}
\item [(d)] $e^{L_1(c)}=-1$, $e^{L_2(c)}=-1$, $e^{d_1-d_2}=-1$ and $e^{d_3-d_4}=-1$.\end{enumerate}\vspace{1.2mm}
\item[\textbf{(iii)}] $g_1(z)$ and $g_2(z)$ have the same form as in \textbf{(ii)}, \beas f(z)=\frac{1}{\sqrt{2}}\left[A_1e^{L_1(z)+\Phi(z)+d_1}+A_2e^{L_2(z)+\Psi(z)+d_3}\right],\eeas \beas g(z)=\frac{1}{\sqrt{2}}\left[A_1e^{L_1(z)+\Phi(z)+d_2}+A_2e^{L_2(z)+\Psi(z)+d_4}\right],\eeas where $L_1,L_2,\Phi,\Psi$ are same as in \textbf{(ii)} with $L_1(z)+\Phi(z)\neq L_2(z)+\Psi(z)$, and satisfy one of the following relations:\vspace{1.2mm}
\begin{enumerate}
\item [(a)] $e^{L_1(c)}=A_2/A_1$, $e^{L_2(c)}=A_1/A_2$, $e^{d_1-d_2}=1$ and $e^{d_3-d_4}=1$,\vspace{1.2mm}
\item [(b)] $e^{L_1(c)}=A_2/A_1$, $e^{L_2(c)}=-A_1/A_2$, $e^{d_1-d_2}=1$ and $e^{d_3-d_4}=-1$,\vspace{1.2mm}
\item [(c)] $e^{L_1(c)}=-A_2/A_1$, $e^{L_2(c)}=A_1/A_2$, $e^{d_1-d_2}=-1$ and $e^{d_3-d_4}=1$,\vspace{1.2mm}
\item [(d)] $e^{L_1(c)}=-A_2/A_1$, $e^{L_2(c)}=-A_1/A_2$, $e^{d_1-d_2}=-1$ and $e^{d_3-d_4}=-1$.\end{enumerate}
\end{enumerate}\end{theo}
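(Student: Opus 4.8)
The plan is to linearise the quadratic trinomial on the left-hand side of each equation of \eqref{e2.1}. Since $w^{2}\neq 0,1$, one has the identity
\[
X^{2}+2wXY+Y^{2}=\Bigl(\sqrt{1+w}\,\tfrac{X+Y}{\sqrt 2}\Bigr)^{2}+\Bigl(\sqrt{1-w}\,\tfrac{X-Y}{\sqrt 2}\Bigr)^{2},
\]
so that, setting $U(z)=\sqrt{1+w}\,(f(z)+g(z+c))/\sqrt 2$ and $V(z)=\sqrt{1-w}\,(f(z)-g(z+c))/\sqrt 2$, the first equation reads $\bigl(U(z)+iV(z)\bigr)\bigl(U(z)-iV(z)\bigr)=e^{g_{1}(z)}$. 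Both factors are entire of finite order (since $f,g$ are and translation preserves order), and their product is zero-free, so each factor is zero-free, hence equals the exponential of a polynomial; write $U+iV=e^{p}$, $U-iV=e^{g_{1}-p}$. Solving the resulting linear system for $f(z)$ and $g(z+c)$, then replacing $z$ by $z-c$ in the latter, gives
\[
f(z)=\tfrac1{\sqrt 2}\bigl(A_{1}e^{p(z)}+A_{2}e^{g_{1}(z)-p(z)}\bigr),\qquad
g(z)=\tfrac1{\sqrt 2}\bigl(A_{2}e^{p(z-c)}+A_{1}e^{g_{1}(z-c)-p(z-c)}\bigr),
\]
with $A_{1},A_{2}$ the constants fixed before the theorem. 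The same argument applied to the second equation produces a polynomial $q$ with $g(z)=\tfrac1{\sqrt2}(A_{1}e^{q(z)}+A_{2}e^{g_{2}(z)-q(z)})$ and $f(z)=\tfrac1{\sqrt2}(A_{2}e^{q(z-c)}+A_{1}e^{g_{2}(z-c)-q(z-c)})$.

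Next I would compare the two expressions for $f$ and the two for $g$, obtaining two identities of the form ``sum of two exponentials of polynomials $=$ sum of two exponentials of polynomials''. Write $\partial_{c}:=\sum_{j=1}^{n}c_{j}\,\partial/\partial z_{j}$. The governing dichotomy is whether $2p-g_{1}$ is constant. If it is, then $f=\tfrac1{\sqrt2}(A_{1}\xi_{1}+A_{2}\xi_{1}^{-1})e^{g_{1}/2}$ for a suitable constant $\xi_{1}$ (and $A_{1}\xi_{1}+A_{2}\xi_{1}^{-1}\neq0$, since $f$ is transcendental); the matching forces $2q-g_{2}$ to be constant as well, and equating the two expressions for $f$ and the two for $g$ produces two scalar relations $e^{(L(c)+d_{1}-d_{2})/2}=R_{1}$, $e^{(-L(c)+d_{1}-d_{2})/2}=R_{2}$ with $R_{1},R_{2}$ explicit in $A_{1},A_{2},\xi_{1},\xi_{2}$, whose product $R_{1}R_{2}$ and quotient $R_{1}/R_{2}$ are exactly the claimed formulas for $e^{d_{1}-d_{2}}$ and $e^{L(c)}$. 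The precise shape $g_{1}=L+\Phi+d_{1}$, $g_{2}=L+\Phi+d_{2}$ comes from the structural lemma below applied to $\partial_{c}g_{1}=\partial_{c}g_{2}=\text{const}$; this is alternative \textbf{(i)}.

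If $2p-g_{1}$ is not constant, then $f$, and by the same reasoning $g$, is a genuine two-term exponential sum, and I would invoke a Borel-type linear independence theorem for $e^{P_{1}},\dots,e^{P_{m}}$ with $P_{i}-P_{j}$ non-constant to match the two sides of the $f$-identity term by term, and likewise for the $g$-identity. Each identity admits precisely two pairings --- the ``aligned'' one, respecting the $A_{1},A_{2}$ labels, and the ``crossed'' one, interchanging them. Of the four combinations, the two mixed ones are readily seen to force $2p-g_{1}$ constant, a contradiction; the aligned--aligned combination leads to alternative \textbf{(ii)} and the crossed--crossed one to alternative \textbf{(iii)}. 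In either surviving combination, chaining the resulting shift relations between $p,q,g_{1},g_{2}$ shows that $p(z+2c)-p(z)$ and $(g_{1}-p)(z+2c)-(g_{1}-p)(z)$ are constants, i.e.\ $\partial_{c}p$ and $\partial_{c}(g_{1}-p)$ are constant; the structural lemma then gives $p=L_{1}+\Phi+d_{1}$ and $g_{1}-p=L_{2}+\Psi+d_{3}$ with $L_{1},L_{2}$ linear and $\Phi,\Psi$ as in \eqref{e2.5}--\eqref{e2.6}, from which the displayed forms of $f$ and $g$ follow (the exponent-constants of $g$ being determined by those of $f$ through the shift relations). Comparing the remaining multiplicative constants in the exponential identities then pins down $e^{L_{1}(c)},e^{L_{2}(c)}$ and $e^{d_{1}-d_{2}},e^{d_{3}-d_{4}}$: one gets $e^{L_{i}(c)}=\pm1$ in case \textbf{(ii)} and $e^{L_{1}(c)}=\pm A_{2}/A_{1}$, $e^{L_{2}(c)}=\pm A_{1}/A_{2}$ in case \textbf{(iii)}, the correlated sign choices giving precisely the four sub-cases (a)--(d); the condition $L_{1}+\Phi\neq L_{2}+\Psi$ merely records that $2p-g_{1}$ is not constant.

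The step I expect to be the real obstacle is this last bookkeeping: one must follow every multiplicative constant through the Borel matching so that the relations on $e^{L_{i}(c)}$ and $e^{d_{i}-d_{j}}$ come out in exactly the stated form, correctly split the aligned and crossed pairings into alternatives \textbf{(ii)} and \textbf{(iii)}, and check that the mixed pairings degenerate to alternative \textbf{(i)} rather than yielding new families. The auxiliary input, used repeatedly above, is the \emph{structural lemma}: a polynomial $P$ on $\mathbb{C}^{n}$ with $\partial_{c}P$ constant is the sum of a linear form and a polynomial annihilated by $\partial_{c}$, and every polynomial annihilated by $\partial_{c}$ is an expression of the shape $\Phi$ in \eqref{e2.5} (a polynomial in the linear forms in $2,3,\dots,n$ of the coordinates that vanish on $c$); this has to be stated and proved, or quoted, independently of the functional equation, and it is there that the combinatorial sums over the $\binom{n}{2},\binom{n}{3},\dots$ coordinate subsets enter.
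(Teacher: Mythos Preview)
Your proposal is correct and follows essentially the same route as the paper: the same diagonalising substitution to factor each trinomial, the same conclusion that $f$ and $g$ are two-term exponential sums with polynomial exponents, the same dichotomy on whether the exponent difference $2p-g_{1}$ (the paper's $\gamma_{1}-\gamma_{2}$) is constant, the same Borel-type matching (the paper uses its Lemma~\ref{lem3.1a}) producing four pairings of which two collapse and two give \textbf{(ii)} and \textbf{(iii)}, and the same appeal to the structural fact about polynomials with constant $c$-shift to extract the $L+\Phi$ shapes. The only organisational difference is that the paper treats ``one exponent-difference constant, the other not'' as two separate cases (its Cases~3 and~4) and rules them out directly via Lemma~\ref{lem3.1a}, whereas you fold this into the assertion ``the matching forces $2q-g_{2}$ to be constant as well''; and for the two degenerate pairings the paper chains the shift relations through period $4c$ rather than $2c$ to reach the contradiction---details you would recover when you carry out the bookkeeping you flagged as the main obstacle.
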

\par The following examples show the existence of transcendental entire solutions with finite order of the system \eqref{e2.1}.
\begin{exm}
Let $n=2$, $w=3$, $\xi_1=\xi_2=1$, $c_1=c_2=\pi i$, $d_1=\pi i, d_2=-\pi i$, $g_1(z)=z_1+z_2+(z_1-z_2)^3+d_1$, $g_2(z)=z_1+z_2+(z_1-z_2)^3+d_2$. Then in view of the conclusion \textbf{(i)} of Theorem {\em\ref{t1}}, we can see that \beas (f(z),g(z))=\left(\frac{1}{2\sqrt{2}}e^{\frac{1}{2}[z_1+z_2+(z_1-z_2)^3+d_1]},\frac{1}{2\sqrt{2}}e^{\frac{1}{2}[z_1+z_2+(z_1-z_2)^3+d_2]}\right)\eeas is a solution of \eqref{e2.1}.\end{exm}
\begin{exm}
Let $w=2$, $n=3$, $L(z)=z_1+z_2+z_3$, $L_2(z)=z_1+z_2-z_3$, $c_1=\pi i, c_2=2\pi i, c_3=-\pi i$, $\Phi(z)=-\pi^2(z_1-z_2)^2+i\pi^3(z_1+z_2)^3+\pi^4(z_2-z_3)^4$, $\Psi(z)=\pi^4(z_1+2z_2-z_3)^2$. Choose $d_j\in\mathbb{C}$ such that $d_1-d_2=2\pi i$ and $d_3-d_4=4\pi i$. Then, in view of the conclusion \textbf{(ii)} of Theorem {\em\ref{t1}}, $(f(z),g(z))$, where \beas f(z)=\frac{1}{\sqrt{2}}\left[-\frac{3-\sqrt{3}}{6}e^{L_1(z)+\Phi(z)+d_1}+\frac{3+\sqrt{3}}{6}e^{L_2(z)+\Psi(z)+d_3}\right]\;\;\text{and}\eeas \beas g(z)=\frac{1}{\sqrt{2}}\left[-\frac{3-\sqrt{3}}{6}e^{L_2(z)+\Psi(z)+d_3}+\frac{3+\sqrt{3}}{6}e^{L_1(z)+\Phi(z)+d_1}\right]\eeas is a solution of \eqref{e2.1}.
\end{exm}
\begin{theo}\label{t2}
Let $k\in\mathbb{N}$, $c=(c_1,c_2)\in\mathbb{C}^2$, $w\in \mathbb{C}$ such that $w^2\neq0,1$, $g_1(z_1,z_2),g_2(z_1,z_2)$ be any two polynomials in $\mathbb{C}^2$. If $(f,g)$ is a pair of finite order transcendental entire solution of \eqref{e2.3}, then one of the following cases must occur.\vspace{1.2mm}
\begin{enumerate}
\item[\textbf{(i)}] $g_1(z_1,z_2)=\mathcal{L}(z)+\chi(z_2)+d_1$, $g_2(z_1,z_2)=\mathcal{L}(z)+\chi(z_2)+d_2$, \beas f(z)=\frac{A_2\xi_2+A_1\xi_2^{-1}}{\sqrt{2}}e^{\frac{1}{2}[\mathcal{L}(z)+\chi(z_2)-\mathcal{L}(c)+d_2]},\;g(z)=\frac{A_2\xi_1+A_1\xi_1^{-1}}{\sqrt{2}}e^{\frac{1}{2}[\mathcal{L}(z)+\chi(z_2)-\mathcal{L}(c)+d_1]},\eeas where $\mathcal{L}(z)=a_1z_1+a_2z_2$, $\chi(z_2)$ is a polynomial in $z_2$, only such that $\chi(z_2+c_2)=\chi(z_2)$, $\xi_j\neq0, a_j,d_j\in\mathbb{C}$, $j=1,2$, $a_1\neq0$, and satisfy the following: \beas \begin{cases}		e^{\mathcal{L}(c)}=\left(\dfrac{a_1}{2}\right)^{2k}\dfrac{(A_2\xi_2+A_1\xi_2^{-1})(A_2\xi_1+A_1\xi_1^{-1})}{(A_1\xi_1+A_2\xi_1^{-1})(A_1\xi_2+A_2\xi_2^{-1})},\vspace{1mm}\\e^{\frac{1}{2}[\mathcal{L}(c)+d_1-d_2]}=\left(\dfrac{a_1}{2}\right)^k\dfrac{A_2\xi_2+A_1\xi_2^{-1}}{A_1\xi_1+A_2\xi_1^{-1}}.\end{cases}\eeas In particular, if $c_2\neq0$, then $\chi(z_2)$ becomes linear in $z_2$.
\item[\textbf{(ii)}] \beas g_1(z_1,z_2)=L_1(z)+L_2(z)+\Phi_1(z_2)+\Psi_1(z_2)+d_1+d_3,\eeas \beas g_2(z_1,z_2)=L_1(z)+L_2(z)+\Phi_1(z_2)+\Psi_1(z_2)+d_2+d_4,\eeas \beas f(z_1,z_2)=\frac{A_2e^{L_2(z)+\Psi_1(z_2)-L_2(c)+d_4}+A_1e^{L_1(z)+\Phi_1(z_2)-L_1(c)+d_2}}{\sqrt{2}},\eeas\beas g(z_1,z_2)=\frac{A_2e^{L_1(z)+\Phi_1(z_2)-L_1(c)+d_1}+A_1e^{L_2(z)+\Psi_1(z_2)-L_2(c)+d_3}}{\sqrt{2}},\eeas where $L_1(z)=a_1z_1+a_2z_2$, $L_2(z)=b_1z_1+b_2z_2$, $\Phi_1, \Psi_1$ are two polynomials in $z_2$, only with $\Phi_1(z_2+c_2)=\Phi_1(z_2)$, $\Psi_1(z_2+c_2)=\Psi_1(z_2)$ such that $L_1(z)+\Phi_1(z_2)\neq L_2(z)+\Psi_1(z_2)$, $a_i,b_1,d_j\in\mathbb{C}$ for $i=1,2$, $j=1,2.3.4$, and satisfy one of the following relations:\vspace{1.2mm}
\begin{enumerate}\item [(a)] $e^{L_1(c)}=a_1^k$, $e^{L_2(c)}=b_1^k$, $e^{d_1-d_2}=1$ and $e^{d_3-d_4}=1$,\vspace{1.2mm}
\item [(b)] $e^{L_1(c)}=a_1^k$, $e^{L_2(c)}=-b_1^k$, $e^{d_1-d_2}=1$ and $e^{d_3-d_4}=-1$,\vspace{1.2mm}
\item [(c)] $e^{L_1(c)}=-a_1^k$, $e^{L_2(c)}=b_1^k$, $e^{d_1-d_2}=-1$ and $e^{d_3-d_4}=1$,\vspace{1.2mm}
\item [(d)] $e^{L_1(c)}=-a_1^k$, $e^{L_2(c)}=-b_1^k$, $e^{d_1-d_2}=-1$ and $e^{d_3-d_4}=-1$.\end{enumerate}\vspace{1.2mm}
\item[\textbf{(iii)}] The form of $g_1(z_1,z_2)$ and $g_2(z_1,z_2)$ are the same as in \textbf{(ii)}, \beas f(z_1,z_2)=\frac{A_2e^{L_1(z)+\Phi_1(z_2)-L_1(c)+d_2}+A_1e^{L_2(z)+\Psi_1(z_2)-L_2(c)+d_4}}{\sqrt{2}},\eeas\beas g(z_1,z_2)=\frac{A_2e^{L_1(z)+\Phi_1(z_2)-L_1(c)+d_1}+A_1e^{L_2(z)+\Psi_1(z_2)-L_2(c)+d_3}}{\sqrt{2}},\eeas where $L_1,L_2,\Phi_1,\Psi_1$ are the same as in \textbf{(ii)}, and satisfy one of the following conditions:\vspace{1.2mm}
\begin{enumerate}
\item [(a)] $e^{L_1(c)}=A_2a_1^k/A_1$, $e^{L_2(c)}=A_1b_1^k/A_2$, $e^{d_1-d_2}=1$ and $e^{d_3-d_4}=1$,\vspace{1.2mm}
\item [(b)] $e^{L_1(c)}=A_2a_1^k/A_1$, $e^{L_2(c)}=-A_1b_1^k/A_2$, $e^{d_1-d_2}=1$ and $e^{d_3-d_4}=-1$,\vspace{1.2mm}
\item [(c)] $e^{L_1(c)}=-A_2a_1^k/A_1$, $e^{L_2(c)}=A_1b_1^k/A_2$, $e^{d_1-d_2}=-1$ and $e^{d_3-d_4}=1$,\vspace{1.2mm}
\item [(d)] $e^{L_1(c)}=-A_2a_1^k/A_1$, $e^{L_2(c)}=-A_1b_1^k/A_2$, $e^{d_1-d_2}=-1$ and $e^{d_3-d_4}=-1$.\end{enumerate}	
\end{enumerate}\end{theo}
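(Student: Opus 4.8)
The plan is to adapt the factorisation method behind Theorems~B, C and E to the setting of $k$-th $z_1$-derivatives and exponential right-hand sides. The starting point is the identity
$$X^2+2wXY+Y^2=\big(X+\alpha_1Y\big)\big(X+\alpha_2Y\big),\qquad \alpha_1=-w+\sqrt{w^2-1},\quad \alpha_2=-w-\sqrt{w^2-1}\ \ (\alpha_1\alpha_2=1),$$
or, in the notation fixed before Theorem~\ref{t2}, the diagonalisation of $X^2+2wXY+Y^2$ through $A_1,A_2$. Applying it to the two equations of \eqref{e2.3} (with $X=\partial_{z_1}^kf$, $Y=g(z+c)$ in the first, and $X=\partial_{z_1}^kg$, $Y=f(z+c)$ in the second), each trinomial becomes a product of two entire functions of finite order whose product is the nowhere-vanishing $e^{g_j}$; hence each factor is itself nowhere vanishing, and a zero-free entire function of finite order on $\mathbb{C}^2$ is the exponential of a polynomial. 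Solving the resulting $2\times2$ linear systems, one gets polynomials $P_1,Q_1,P_2,Q_2$ with $P_1+Q_1=g_1$, $P_2+Q_2=g_2$ and
$$\partial_{z_1}^kf=\tfrac{1}{\sqrt2}\big(A_1e^{P_1}+A_2e^{Q_1}\big),\qquad g(z+c)=\tfrac{1}{\sqrt2}\big(A_2e^{P_1}+A_1e^{Q_1}\big),$$
$$\partial_{z_1}^kg=\tfrac{1}{\sqrt2}\big(A_1e^{P_2}+A_2e^{Q_2}\big),\qquad f(z+c)=\tfrac{1}{\sqrt2}\big(A_2e^{P_2}+A_1e^{Q_2}\big).$$

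Next I would recover $f,g$ from the two translation relations via $z\mapsto z-c$ and feed these back into the two $k$-th-derivative relations; since no other information remains in the system, this is the heart of the matter. Writing $\partial_{z_1}^ke^{h}=R_k(h)e^{h}$, where $R_k(h)$ is the differential polynomial in the $z_1$-derivatives of $h$ with top term $(\partial_{z_1}h)^k$ — so that $R_k(h)$ is a nonzero constant precisely when $\partial_{z_1}h$ is a nonzero constant, and $R_k(h)\equiv0$ precisely when $\partial_{z_1}h\equiv0$ — this produces the identity
$$A_2R_k\big(P_1(z-c)\big)e^{P_1(z-c)}+A_1R_k\big(Q_1(z-c)\big)e^{Q_1(z-c)}=A_1e^{P_2(z)}+A_2e^{Q_2(z)},$$
together with the analogous one obtained by swapping $(P_1,Q_1)\leftrightarrow(P_2,Q_2)$. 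Both sides are polynomial combinations of exponentials of polynomials, so a Borel-type lemma (group exponents that differ by an additive constant; the grouped coefficients must then vanish) applies: since $A_1,A_2\neq0$ the right-hand terms cannot disappear (which also disposes of the would-be branch $g\equiv0$ or $f\equiv0$, excluded by transcendence in any case), hence $\{P_1(z-c),Q_1(z-c)\}$ coincides, up to additive constants, with $\{P_2,Q_2\}$, and matching coefficients forces each of $R_k(P_i(z-c))$, $R_k(Q_i(z-c))$ to be a nonzero constant. Therefore every $P_i,Q_i$ has the form $az_1+\chi(z_2)+\text{const}$, the matching relations read $P_2=P_1(\cdot-c)+\text{const}$, $Q_2=Q_1(\cdot-c)+\text{const}$ (or with $P_2,Q_2$ swapped), so $g_1-g_2$ is a constant; iterating the shift relation gives $P_1(z)-P_1(z-2c)=\text{const}$, and reading off the $z_2$-part — after absorbing a possible linear term into $\mathcal{L}$ (resp.\ $L_1,L_2$) — yields exactly the periodicity conditions $\chi(z_2+c_2)=\chi(z_2)$ (resp.\ $\Phi_1(z_2+c_2)=\Phi_1(z_2)$, $\Psi_1(z_2+c_2)=\Psi_1(z_2)$), which collapse to linearity when $c_2\neq0$.

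It then remains to read off the three forms from the dichotomy $P_1-Q_1\in\mathbb{C}$ or $P_1-Q_1\notin\mathbb{C}$. If $P_1-Q_1$ is constant, writing $e^{P_1}=\xi_1e^{g_1/2}$, $e^{Q_1}=\xi_1^{-1}e^{g_1/2}$ (and likewise for $g_2$ with a constant $\xi_2$) collapses $f$ and $g$ to single exponentials; substituting into the shift relations and using $R_k(P_1)=(\partial_{z_1}P_1)^k=(a_1/2)^k$ (since here $P_1=\tfrac12 g_1+\text{const}$) gives form \textbf{(i)} with its two displayed constraints on $e^{\mathcal{L}(c)}$ and $e^{\frac12[\mathcal{L}(c)+d_1-d_2]}$. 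If $P_1-Q_1$ is non-constant, $f$ and $g$ are genuine two-term exponential sums; the two admissible pairings of $\{P_1(\cdot-c),Q_1(\cdot-c)\}$ with $\{P_2,Q_2\}$ across the two identities give forms \textbf{(ii)} and \textbf{(iii)}, and within each the four sign choices ($\pm1$, resp.\ $\pm A_2/A_1$) forced on $e^{L_j(c)}$ and $e^{d_i-d_{i'}}$ by the coefficient matching give sub-cases (a)--(d), with $R_k(L_j)=a_1^k$ or $b_1^k$ producing the $a_1^k,b_1^k$ there. The main obstacle is precisely this final case analysis: one must handle the degenerate branch in which $P_1,Q_1$ (hence all four exponents) share the same $z_1$-slope, so that the pairing is decided only by the $z_2$-parts, and then push every surviving branch through the full web of constant relations without mislaying the bookkeeping for $a_j,b_j,d_j,\xi_j$; the several-variable Borel lemma and the "zero-free, finite order $\Rightarrow$ exponential of a polynomial" fact on $\mathbb{C}^n$ are the only external ingredients.
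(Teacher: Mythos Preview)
Your approach is essentially the same as the paper's: factorise via $A_1,A_2$ to obtain four polynomial exponents $\gamma_1,\gamma_2,\gamma_3,\gamma_4$ (your $P_1,Q_1,P_2,Q_2$), cross the shift and derivative representations into a pair of exponential-sum identities, and resolve these with a Borel-type lemma together with the observation that $R_k(h)$ constant forces $\partial_{z_1}h$ constant. The paper organises the endgame slightly differently --- it splits first on whether $\gamma_1-\gamma_2$ and $\gamma_3-\gamma_4$ are constant, uses the second fundamental theorem (rather than your direct Borel grouping) to eliminate the mixed case, and in the non-constant case checks all four pairings across the two identities explicitly, showing that the two ``twisted'' pairings force $\gamma_1-\gamma_2$ to be constant after all --- but the strategy and the external ingredients are the same.
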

\par The following examples show the existence of transcendental entire solutions with finite order of the system \eqref{e2.3}.\vspace{1.2mm}
\begin{exm}Let $k=1$, $L(z)=z_1+z_2$, $\xi_1=1$, $\xi_2=-1$ and $w$ be a complex number such that $w\neq0,\pm 1$. Choose $c=(c_1,c_2)\in\mathbb{C}$ such that $e^{L(c)/2}=1/2$ and $d_1,d_2\in\mathbb{C}$ such that $e^{(d_1-d_2)/2}=-1$. Then, in view of the conclusion (i) of Theorem {\em\ref{t2}}, we easily see that $(f,g)$, where \beas f(z_1,z_2)=-\frac{2}{\sqrt{2(1+w)}}e^{\frac{1}{2}[z_1+z_2+d_2]}\;\;\text{and}\;\;g(z_1,z_2)=\frac{2}{\sqrt{2(1+w)}}e^{\frac{1}{2}[z_1+z_2+d_1]}\eeas is a solution of \eqref{e2.3} with $g_1(z_1,z_2)=z_1+z_2+d_1$ and $g_2(z_1,z_2)=z_1+z_2+d_2$.\end{exm}
\begin{exm}
Let $k=1$, $w=2$, $L_1(z)=z_1+z_2$, $L_2(z)=z_1-2z_2$, $c_1=8\pi i/3, c_2=-2\pi i/3$. Choose $d_j\in\mathbb{C}$ for $j=1,\ldots,4$ such that $d_1-d_2=2m\pi i$ and $d_3-d_4=2p\pi i$, $m,p\in\mathbb{Z}$. Then, in view of the conclusion (ii) of Theorem {\em\ref{t2}}, we easily see that $(f(z),g(z))$, where \beas f(z_1,z_2)=\frac{1}{6\sqrt{2}}\left[-(3-\sqrt{3})e^{z_1+z_2+d_2}+(3+\sqrt{3})e^{z_1-2z_2+d_4}\right]\;\;\text{and}\eeas \beas g(z_1,z_2)=\frac{1}{6\sqrt{2}}\left[(3+\sqrt{3})e^{z_1+z_2+d_1}-(3-\sqrt{3})e^{z_1-2z_2+d_3}\right]\eeas
\end{exm} is a solution of \eqref{e2.3} with $g_1(z_1,z_2)=2z_1-z_2+d_1+d_3$ and $g_2(z_1,z_2)=2z_1-z_2+d_2+d_4$.
\begin{theo}\label{t3}
Let $k$ be a positive integer, $c=(c_1,c_2)\in\mathbb{C}^2$, $w\in\mathbb{C}$ such that $w^2\neq0,1$ and $f(z), g(z)$ are transcendental entire functions of finite order not of $c$-periodic. If $(f(z),g(z))$ be a solution of \eqref{e1.6}, then one of the following conclusions must hold.\vspace{1.2mm}
\begin{enumerate}
\item [\textbf{(i)}]  \beas f(z)=\frac{A_1e^{L(z)+\Phi_1(z_2)+\gamma}+A_2e^{-(L(z)+\Phi_1(z_2)+\gamma)}}{\sqrt{2}\alpha^k}\;\; \text{and}\eeas \beas g(z)=\frac{A_1e^{L(z)+\Phi_1(z_2)+\gamma+\eta}+A_2e^{-(L(z)+\Phi_1(z_2)+\gamma+\eta)}}{\sqrt{2}\alpha^k}.\eeas where $L(z)=\alpha z_1+\beta z_2+\gamma$, $\Phi_1(z_2)$ is a polynomial in $z_2$ with $\Phi_1(z_2+c_2)=\Phi_1(z_2)$, $\alpha,\beta,\gamma,\eta\in\mathbb{C}$, and satisfy the following relations:\vspace{1.2mm}
\beas e^{L(c)}=\frac{A_2e^{\eta}}{(-1)^kA_1\alpha^k+A_2e^{\eta}},\;\; e^{2\eta}=1\;\;\text{and}\;\; \alpha^{k}=-\frac{(-1)^kA_1^2+A_2^2}{(-1)^kA_1A_2e^{\eta}}.\eeas In particular, if $c_2\neq0$, then the polynomial $\Phi_1(z_2)$ becomes linear in $z_2$.\vspace{1.2mm}
\item [\textbf{(ii)}] \beas f(z)=\frac{A_1e^{L(z)+\Phi_1(z_2)+\gamma}+A_2e^{-(L(z)+\Phi_1(z_2)+\gamma)}}{\sqrt{2}\alpha^k},\eeas \beas g(z)=\frac{A_1e^{-(L(z)+\Phi_1(z_2)+\gamma)+\eta}+A_2e^{L(z)+\Phi_1(z_2)+\gamma-\eta}}{\sqrt{2}\alpha^k},\eeas where $k$ is an even positive integer, $L(z),\Phi_1(z_2)$ are the same as in \textbf{(i)}, and satisfy the following relations:\vspace{1.2mm}
\beas e^{L(c)}=1+e^{-\eta}\alpha^k,\;\; e^{2\eta}=1\;\;\text{and}\;\; \alpha^{k}=-2e^{\eta}.\eeas
\end{enumerate}
\end{theo}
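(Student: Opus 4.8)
The plan is to linearise each quadratic trinomial, reduce the system \eqref{e1.6} to two exponential--polynomial identities obtained by differentiating the two shift relations $k$ times in $z_{1}$, and then read off the admissible shapes of the exponents from a Borel-type lemma. First, using the identity $u^{2}+2wuv+v^{2}=\tfrac{1+w}{2}(u+v)^{2}+\tfrac{1-w}{2}(u-v)^{2}$, each equation of \eqref{e1.6} becomes a Fermat-type equation $U^{2}+V^{2}=1$ with $U,V$ entire of finite order, so $(U+iV)(U-iV)\equiv1$; since both factors are zero-free entire functions of finite order on $\mathbb{C}^{2}$, they equal $e^{p}$ and $e^{-p}$ for some polynomial $p$. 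Rewriting this with the constants $A_{1},A_{2}$ of the statement (the mechanism underlying Theorem~A) produces polynomials $p_{1},p_{2}$ in $(z_{1},z_{2})$ with
\[
\frac{\partial^{k}f}{\partial z_{1}^{k}}=\frac{A_{1}e^{p_{1}}+A_{2}e^{-p_{1}}}{\sqrt2},\qquad g(z+c)-g(z)=\frac{A_{2}e^{p_{1}}+A_{1}e^{-p_{1}}}{\sqrt2},
\]
\[
\frac{\partial^{k}g}{\partial z_{1}^{k}}=\frac{A_{1}e^{p_{2}}+A_{2}e^{-p_{2}}}{\sqrt2},\qquad f(z+c)-f(z)=\frac{A_{2}e^{p_{2}}+A_{1}e^{-p_{2}}}{\sqrt2}.
\]
The hypothesis that $f$ and $g$ are not $c$-periodic ensures the two difference terms are not identically zero, so the exponentials genuinely occur.

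Next I would apply $\partial^{k}/\partial z_{1}^{k}$ to $f(z+c)-f(z)=\tfrac{1}{\sqrt2}(A_{2}e^{p_{2}}+A_{1}e^{-p_{2}})$ and to the companion relation for $g$, substitute the two derivative relations on the left, and use $\partial_{z_{1}}^{k}e^{\pm p_{j}}=Q_{j}^{\pm}e^{\pm p_{j}}$ with polynomials $Q_{j}^{\pm}$. This gives
\[
A_{1}e^{p_{1}(z+c)}+A_{2}e^{-p_{1}(z+c)}-A_{1}e^{p_{1}}-A_{2}e^{-p_{1}}=A_{2}Q_{2}^{+}e^{p_{2}}+A_{1}Q_{2}^{-}e^{-p_{2}}
\]
and the twin identity with $p_{1}$ and $p_{2}$ swapped. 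Now I invoke the standard lemma that a vanishing linear combination $\sum_{j}P_{j}e^{Q_{j}}\equiv0$ of exponentials of polynomials with polynomial coefficients forces two of the exponents to differ by a constant, and discard the degenerate branches: if $\partial_{z_{1}}p_{j}\equiv0$ then one of $f,g$ is a polynomial in $z_{1}$ whose complementary equation makes it $c$-periodic, contradicting the hypothesis, while the branch yielding $w^{2}=1$ is excluded outright. What survives is $\partial_{z_{1}}p_{1}=\partial_{z_{1}}p_{2}=\alpha\in\mathbb{C}\setminus\{0\}$, whence $\deg_{z_{1}}p_{j}=1$, the coefficients collapse to $Q_{j}^{\pm}=(\pm\alpha)^{k}$, and either $p_{2}-p_{1}$ is a constant — this leads to conclusion \textbf{(i)} — or $p_{2}+p_{1}$ is a constant, which turns out to be consistent with the other relations only when $k$ is even, leading to conclusion \textbf{(ii)}.

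Finally, writing $p_{1}=\alpha z_{1}+h(z_{2})$ and feeding it into $g(z+c)-g(z)=\tfrac{1}{\sqrt2}(A_{2}e^{p_{1}}+A_{1}e^{-p_{1}})$ together with the known shape of $\partial_{z_{1}}^{k}g$ forces $h(z_{2}+c_{2})-h(z_{2})$ to be constant, so $h(z_{2})=\beta z_{2}+\Phi_{1}(z_{2})$ with $\Phi_{1}(z_{2}+c_{2})=\Phi_{1}(z_{2})$, which is linear (indeed constant) once $c_{2}\neq0$. Because $p_{j}$ is linear in $z_{1}$, integrating the derivative relations $k$ times in $z_{1}$ is exact and recovers $f$ and $g$ up to a polynomial of degree $<k$ in $z_{1}$; substituting back into \eqref{e1.6} and equating the coefficients of the independent exponentials $e^{\pm(L(z)+\Phi_{1}(z_{2})+\gamma)}$ kills that extra polynomial and delivers exactly the relations $e^{L(c)}=A_{2}e^{\eta}/\bigl((-1)^{k}A_{1}\alpha^{k}+A_{2}e^{\eta}\bigr)$, $e^{2\eta}=1$, $\alpha^{k}=-\bigl((-1)^{k}A_{1}^{2}+A_{2}^{2}\bigr)/\bigl((-1)^{k}A_{1}A_{2}e^{\eta}\bigr)$ in case \textbf{(i)}, and $e^{L(c)}=1+e^{-\eta}\alpha^{k}$, $e^{2\eta}=1$, $\alpha^{k}=-2e^{\eta}$ (with $k$ even) in case \textbf{(ii)}.

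I expect the main obstacle to be the exponent-matching in the second step: one has to organise the comparison so as to eliminate every spurious branch — notably the possibility $\partial_{z_{1}}p_{j}\equiv0$, where the right-hand sides of the difference relations lose their $z_{1}$-dependence, and the subcase $c_{1}=0$, where $p_{1}(z+c)-p_{1}(z)$ need not be constant a priori — and then check that the surviving branch really forces $\deg_{z_{1}}p_{j}=1$, so that the differentiated coefficients $Q_{j}^{\pm}$ reduce to the pure constants $(\pm\alpha)^{k}$. Once that is in place, separating the three displayed relations and verifying that $k$ must be even in case \textbf{(ii)} is routine bookkeeping with $A_{1},A_{2}$.
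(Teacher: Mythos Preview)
Your outline is essentially the paper's proof: the same linearisation produces the four relations in your first display (the paper's (3.37)), the paper likewise differentiates the two shift relations $k$ times in $z_{1}$ and combines with the derivative relations to obtain two exponential--polynomial identities, and then runs a case split on whether $p_{2}-p_{1}$ and $p_{2}+p_{1}$ are constant, using Borel-type lemmas (Lemmas~\ref{lem3.1} and~\ref{lem3.1a}) and the second fundamental theorem to kill the unwanted branches.

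Two points to tighten. First, your claim ``what survives is $\partial_{z_{1}}p_{1}=\partial_{z_{1}}p_{2}=\alpha$'' is inconsistent with the branch $p_{1}+p_{2}=\mathrm{const}$ that you immediately invoke for conclusion~\textbf{(ii)}: that branch forces $\partial_{z_{1}}p_{2}=-\alpha$. The correct intermediate statement is only that each $p_{j}$ is affine in $z_{1}$, with the $z_{1}$-coefficients related by a sign. Second, you still owe the elimination of the branch where \emph{both} $p_{1}+p_{2}$ and $p_{1}-p_{2}$ are non-constant; this does not fall out automatically from a single application of the Borel lemma. The paper handles it (its Subcase~2.2) by a further four-way split on whether $p_{1}(z+c)+p_{2}(z)$ and $p_{2}(z+c)+p_{1}(z)$ are constant, each time using Lemma~\ref{lem3.1} to force an additional relation and then checking that the accumulated constraints contradict one of the standing non-constancy assumptions. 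This is exactly the bookkeeping you flag as the main obstacle, and it cannot be skipped.
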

\par The following example shows that the existence of transcendental entire solutions of \eqref{e1.6} is precise.\vspace{1.2mm}
\begin{exm}
Let $k=2, w=2$ and choose $\alpha, \eta, c_1,c_2\in\mathbb{C}$ such that $\alpha^2=-2, \eta=2p\pi i$ and $c_1+c_2=(2m+1)\pi i$, $p,m\in\mathbb{Z}$. Then in view of the conclusion (ii) of Theorem \em{\ref{t3}}, we see that $(f,g)$, where \beas f(z_1,z_2)=\frac{1}{12\sqrt{2}}\left[(3-\sqrt{3})e^{\alpha z_1+z_2+1}-(3+\sqrt{3})e^{-(\alpha z_1+z_2+1)}\right]\;\;\text{and}\eeas  \beas g(z_1,z_2)=\frac{1}{12\sqrt{2}}\left[(3-\sqrt{3})e^{-(\alpha z_1+z_2+1)}-(3+\sqrt{3})e^{\alpha z_1+z_2+1}\right]\;\;\text{and}\eeas is a solution of \eqref{e1.6}.\end{exm}
\section{Proof of the main results}
\par Before we starting the proof of the main results, we present here some necessary lemmas which will play key role to prove the main results of this paper.
\begin{lem}\label{lem3.1}{\em\cite{Yan & Yi & 2003}}
Let $f_j\not\equiv0$ $(j=1,2\ldots,m;\; m\geq 3)$ be meromorphic functions such that $f_1,\ldots, f_{m-1}$ are not constants, $f_1+f_2+\cdots+ f_m=1$ and such that\beas \sum_{j=1}^{m}\left\{N_{n-1}\left(r,\frac{1}{f_j}\right)+(m-1)\ol N(r,f_j)\right\}< \lambda T(r,f_j)+O(\log^{+}T(r,f_j))\eeas holds for $j=1,\ldots, m-1$ and all $r$ outside possibly a set with finite logarithmic measure, where $\lambda < 1$ is a positive number. Then $f_m = 1$.	
\end{lem}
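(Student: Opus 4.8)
The plan is to argue by contradiction — assume the hypothesis of the lemma holds but $f_m\not\equiv1$ — and to induct on the number $m$ of summands, the engine being the truncated form of Cartan's second main theorem.

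The starting point is that the hypothesis is \emph{incompatible} with $f_1,\dots,f_m$ being linearly independent over $\mathbb C$. Indeed, if they were independent, the meromorphic curve $\mathbf f=[f_1:\cdots:f_m]\colon\mathbb C\to\mathbb P^{m-1}$ would be linearly nondegenerate, and one could apply Cartan's truncated second main theorem to it with the $m+1$ hyperplanes $H_l=\{x_l=0\}$ $(l=1,\dots,m)$ and $H_0=\{x_1+\cdots+x_m=0\}$, which are in general position. Since $f_1+\cdots+f_m\equiv1$ never vanishes, the hyperplane $H_0$ contributes nothing; converting the resulting inequality into classical Nevanlinna functions — the truncation level is $m-1$, and the passage from the characteristic of the curve to that of the single component $f_1$ is paid for by the poles of all the $f_j$, each counted at most $m-1$ times after truncation — one would obtain
\[
T(r,f_1)\;\le\;\sum_{j=1}^{m}N_{m-1}\!\Big(r,\tfrac1{f_j}\Big)+(m-1)\sum_{j=1}^{m}\overline N(r,f_j)+S(r),
\]
with a remainder $S(r)$ that is negligible against $T(r,f_1)$ outside a set of finite logarithmic measure. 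Combined with the hypothesis for $k=1$ this forces $(1-\lambda)T(r,f_1)\le o(T(r,f_1))$, which is impossible since $\lambda<1$ and $f_1$ is non-constant.

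Hence there is a nontrivial relation $\sum_j c_j f_j\equiv0$. Using it together with $f_1+\cdots+f_m\equiv1$ to eliminate one summand produces a strictly shorter identity $\sum_{j\in S}(\mathrm{const})\,f_j\equiv1$ whose surviving terms are still non-constant (any term acquiring a zero coefficient is dropped, and $S=\varnothing$ cannot occur). This shorter identity satisfies a hypothesis of the same shape: replacing $f_j$ by a nonzero constant multiple leaves $N_{\bullet}(r,1/f_j)$, $\overline N(r,f_j)$ and $T(r,f_j)$ unchanged, while the number of summands and the multiplier on $\sum\overline N$ only decrease. One then applies the inductive hypothesis; the base case is a two-term identity $g_1+g_2\equiv1$ with $g_1$ non-constant, in which $g_1$ and $g_2=1-g_1$ are automatically linearly independent and the classical second main theorem for $g_1$ with respect to $0,1,\infty$ (noting $g_1-1=-g_2$) again contradicts the hypothesis. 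The assumption $f_m\not\equiv1$ is precisely what keeps this cascade of reductions from ending in a trivial constant identity, so a contradiction is reached in every branch; therefore $f_m\equiv1$.

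I expect the main obstacle to be pinning down the precise form of Cartan's second main theorem for \emph{meromorphic} — not merely holomorphic — data: verifying that the truncation is at level $m-1$ and that the poles contribute with exactly the multiplicity $m-1$, and controlling the remainder term off an exceptional set of finite logarithmic measure. The surrounding linear-algebra bookkeeping — selecting which summand to remove so that a non-constant function survives to be tested, checking the general-position condition, and confirming that the inherited hypothesis is never weaker than what the shorter instance requires — is routine but needs to be done with care.
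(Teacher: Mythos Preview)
The paper does not prove this lemma; it is quoted from Yang--Yi, \emph{Uniqueness Theory of Meromorphic Functions}, and invoked only as a black box in the proofs of the main theorems. There is therefore no argument in the paper to compare yours against.

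Your outline --- Cartan's truncated second main theorem to force a linear dependence among $f_1,\dots,f_m$, then elimination of one summand and induction on $m$ --- is the standard route to this Borel-type result and is correct in substance. The truncation level $m-1$ and the pole contribution $(m-1)\sum_j\ol N(r,f_j)$ you quote are exactly what Cartan's inequality gives for a linearly nondegenerate curve in $\mathbb P^{m-1}$, and your observation that the inherited hypothesis on a shorter sum is never stronger (fewer summands on the left, lower truncation level, characteristics on the right unchanged up to $O(1)$) is the right bookkeeping. The one place where your sketch is thin is the reduction step: you should split cases on whether the nontrivial dependence relation $\sum_j c_jf_j\equiv0$ has $c_m\neq0$. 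If $c_m\neq0$, eliminating $f_m$ leaves a shorter identity $\sum_{j\in S}d_jf_j\equiv1$ with $S\subseteq\{1,\dots,m-1\}$, hence among \emph{non-constant} functions only; the inductive hypothesis (applied with any one of them designated as ``the last'') forces that one to be constant, a contradiction. If $c_m=0$, eliminate instead some $f_k$ with $k<m$ and $c_k\neq0$; then $f_m$ survives in the shorter identity with coefficient $1$, the remaining surviving terms are still non-constant, and the inductive hypothesis yields $f_m\equiv1$, contradicting the standing assumption. With this case split made explicit your argument is complete.
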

\begin{lem}\label{lem3.1a}{\em\cite{Hu & Li & Yang & 2003}}
Let $f_j\not\equiv0$ $(j=1,2,3)$ be meromorphic functions on such that $f_1$ are not constant, $f_1+f_2+f_3=1$, and such that \beas \sum_{j=1}^{3}\left\{N_{2}\left(r,\frac{1}{f_j}\right)+2\ol N(r,f_j)\right\}< \lambda T(r,f_j)+O(\log^{+}T(r,f_j))\eeas
holds for all $r$ outside possibly a set with finite logarithmic measure, where $\lambda < 1$ is a positive number. Then, either $f_2 = 1$ or $f_3=1$.	
\end{lem}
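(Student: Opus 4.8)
The plan is to argue by a short case analysis according to whether $f_2$ and $f_3$ are constant, reducing the generic case to Lemma \ref{lem3.1} and disposing of the degenerate cases by the Second Main Theorem. Throughout I read the counting hypothesis in the standard way (as in Lemma \ref{lem3.1}): the displayed inequality is understood to hold for each relevant index $j\in\{1,2,3\}$, the common left-hand sum being compared against $\lambda T(r,f_j)+O(\log^+T(r,f_j))$. Since the case of interest is $f_1$ non-constant and transcendental, both $S(r,f_1)$ and $O(\log^+T(r,f_1))$ are $o(T(r,f_1))$ off a set of finite logarithmic measure, which I will use repeatedly.

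In the first case I assume both $f_2$ and $f_3$ are non-constant. Then, taking $m=3$ and the ordering $(f_1,f_2,f_3)$, the first two functions $f_1,f_2$ are non-constant, and the truncation level $N_{m-1}=N_2$ and the weight $m-1=2$ appearing in Lemma \ref{lem3.1} match exactly the quantities in our hypothesis. Hence Lemma \ref{lem3.1} applies and yields $f_3=1$, so the desired conclusion holds at once.

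In the second case I assume $f_2$ is constant, say $f_2\equiv a$ with $a\neq0$, and I claim $a=1$. If not, put $b:=1-a\neq0$, so that $f_1+f_3=b$. Writing $F:=f_1/b$ one has $F-1=-f_3/b$, so the values $0,1,\infty$ of $F$ are attained precisely at the zeros of $f_1$, the zeros of $f_3$, and the poles of $f_1$. The Second Main Theorem for these three values gives
\beas T(r,f_1)\le \ol N\left(r,\tfrac{1}{f_1}\right)+\ol N\left(r,\tfrac{1}{f_3}\right)+\ol N(r,f_1)+S(r,f_1)+O(1).\eeas
Since $\ol N(r,1/f_1)\le N_2(r,1/f_1)$ and $\ol N(r,1/f_3)\le N_2(r,1/f_3)$, the right-hand counting terms are dominated by $\sum_{j=1}^3\{N_2(r,1/f_j)+2\ol N(r,f_j)\}$, which by hypothesis is $<\lambda T(r,f_1)+O(\log^+T(r,f_1))$. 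Therefore $(1-\lambda)T(r,f_1)<S(r,f_1)+O(\log^+T(r,f_1))+O(1)$, impossible for transcendental $f_1$ as $r\to\infty$ off a finite-measure set. This contradiction forces $a=1$, i.e. $f_2=1$. The third case, $f_3$ constant, is symmetric under interchanging $f_2$ and $f_3$ and yields $f_3=1$. Combining the three cases gives $f_2=1$ or $f_3=1$.

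The substantive difficulty is entirely packaged in the first case: the Borel-type assertion that a three-term relation $\sum f_j=1$ with sufficiently small ramification must degenerate is exactly the content of Lemma \ref{lem3.1}, so the real work there is only to verify that the truncation level and the weights in our hypothesis match its hypotheses. The genuinely new input is the handling of the degenerate cases, where the one delicate point is that the Second Main Theorem error $S(r,f_1)$ and the $O(\log^+T)$ term be negligible against $T(r,f_1)$ — this is precisely where transcendence of the non-constant function $f_1$ is invoked. Should one wish to avoid citing Lemma \ref{lem3.1} and give a self-contained argument, the first case can instead be treated via the Wronskian $W=W(f_1,f_2,f_3)$: a column reduction using $f_1+f_2+f_3=1$ collapses $W$ to a $2\times2$ Wronskian, a logarithmic-derivative estimate together with the Second Main Theorem and the counting hypothesis forces $W\equiv0$, whence $f_1,f_2,f_3$ are linearly dependent over $\mathbb{C}$, and the relation $f_1+f_2+f_3=1$ then pins down a constant coordinate.
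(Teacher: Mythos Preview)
The paper does not supply a proof of this lemma; it is quoted without argument from \cite{Hu & Li & Yang & 2003}, so there is no in-paper proof to compare against. Your reduction is nonetheless correct: when $f_2$ is non-constant, Lemma~\ref{lem3.1} with $m=3$ applies verbatim (the truncation $N_{m-1}=N_2$ and weight $m-1=2$ match the hypothesis exactly) and yields $f_3=1$; when $f_2$ (respectively $f_3$) is a nonzero constant $a\ne1$, the Second Main Theorem applied to $f_1/(1-a)$ at $0,1,\infty$ produces the contradiction $(1-\lambda)T(r,f_1)<o(T(r,f_1))$.

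Two minor remarks. First, your appeal to transcendence of $f_1$ is unnecessary: for \emph{any} non-constant meromorphic $f_1$ one has $T(r,f_1)\to\infty$, the Second Main Theorem error $S(r,f_1)=o(T(r,f_1))$ (for rational $f_1$ this is Riemann--Hurwitz with a bounded remainder), and $\log^+T(r,f_1)=o(T(r,f_1))$; hence the contradiction stands without assuming transcendence. Second, in your Case~1 the extra assumption that $f_3$ be non-constant is superfluous and in fact makes the case vacuous once $f_3=1$ is derived; a cleaner dichotomy is simply ``$f_2$ non-constant'' versus ``$f_2$ constant'', which already covers everything.
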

\begin{lem}\label{lem3.7}{\em\cite{Yan & Yi & 2003}}
Let $a_0(z), a_1(z),\ldots, a_n(z)$ $(n\geq1)$ be meromorphic functions on and $g_0(z), g_1(z),\ldots,g_n(z)$ are entire functions such that $g_j(z)-g_k(z)$ are not constants for $0\leq j<k\leq n$. If $\sum_{j=0}^{n}a_j(z)e^{g_j(z)}\equiv0$, and $T(r,a_j)=o(T(r))$, where $T(r)=\text{min}_{0\leq j<k\leq n} T(r,e^{g_j-g_k})$ for $j=0,1,\ldots,n$, then $a_j(z)\equiv0$ for each $j=0,1,\ldots,n$.
\end{lem}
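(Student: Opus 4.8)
The plan is to argue by induction on $n$, using the first main theorem in the base case and the lemma on the logarithmic derivative to drive the inductive step. For the base case $n=1$ the relation reads $a_0e^{g_0}+a_1e^{g_1}\equiv 0$; if $a_0\not\equiv 0$, then necessarily $a_1\not\equiv 0$ and $e^{g_0-g_1}=-a_1/a_0$, whence $T(r,e^{g_0-g_1})=T(r,a_1/a_0)+O(1)\le T(r,a_0)+T(r,a_1)+O(1)=o(T(r))$. Since $g_0-g_1$ is a nonconstant entire function, $e^{g_0-g_1}$ is transcendental and $T(r)=T(r,e^{g_0-g_1})\to\infty$, so $T(r)=o(T(r))$ is absurd; hence $a_0\equiv 0$, and then $a_1\equiv 0$.

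For the inductive step I would assume the assertion for every index strictly smaller than $n$ and suppose $\sum_{j=0}^{n}a_je^{g_j}\equiv 0$. If some coefficient vanishes identically, deleting that term leaves a relation of the same shape with only $n$ summands, whose exponents still have pairwise nonconstant differences and whose coefficients remain $o$ of the corresponding (possibly larger) minimal characteristic; the induction hypothesis then kills all remaining coefficients and we are done. So it suffices to rule out the case in which every $a_j\not\equiv 0$. In that case I would divide the relation by $a_ne^{g_n}$ to obtain $\sum_{j=0}^{n-1}(a_j/a_n)e^{g_j-g_n}+1\equiv 0$ and then differentiate, which annihilates the constant term and produces
\[
\sum_{j=0}^{n-1}\Big[(a_j/a_n)'+(a_j/a_n)(g_j'-g_n')\Big]e^{g_j-g_n}\equiv 0 .
\]
This is again a relation of the required type, now with the $n$ exponentials $e^{g_j-g_n}$ whose pairwise exponent differences $g_j-g_k$ $(0\le j<k\le n-1)$ are still nonconstant.

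The crux is to verify that the new coefficients $c_j:=(a_j/a_n)'+(a_j/a_n)(g_j'-g_n')$ are again small with respect to $\widetilde T(r):=\min_{0\le j<k\le n-1}T(r,e^{g_j-g_k})\ge T(r)$, so that the induction hypothesis applies. For the part rational in the $a_j$ this is immediate from $T(r,a_j)=o(T(r))$ together with $T(r,h')\le 2T(r,h)+S(r,h)$. For the factor $g_j'-g_n'$ I would observe that it is exactly the logarithmic derivative $(e^{g_j-g_n})'/e^{g_j-g_n}$; since $g_j-g_n$ is entire, $g_j'-g_n'$ is entire, and the logarithmic derivative lemma gives $T(r,g_j'-g_n')=m(r,g_j'-g_n')=S(r,e^{g_j-g_n})$, which in the relevant finite order setting is $O(\log r)=o(\widetilde T(r))$ because each $e^{g_j-g_k}$ is transcendental. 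Granting $T(r,c_j)=o(\widetilde T(r))$, the induction hypothesis forces $c_j\equiv 0$ for every $j$, i.e. $(a_j/a_n)'/(a_j/a_n)=g_n'-g_j'$, so that $a_je^{g_j}=C_ja_ne^{g_n}$ for constants $C_j$ with $C_0\neq 0$. Reading this for $j=0$ yields $e^{g_0-g_n}=C_0a_n/a_0$ and hence $T(r,e^{g_0-g_n})\le T(r,a_0)+T(r,a_n)+O(1)=o(T(r))$, contradicting $T(r,e^{g_0-g_n})\ge T(r)\to\infty$. This contradiction shows the all-nonzero case cannot occur, completing the induction.

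I expect the main obstacle to be precisely this smallness verification: controlling the logarithmic-derivative contributions $g_j'-g_n'$ relative to $\widetilde T(r)$ rather than merely relative to $T(r,e^{g_j-g_n})$, which could be far larger, so that $o(T(r,e^{g_j-g_n}))$ is not a priori $o(\widetilde T(r))$. In the finite order regime this is settled at once by $S(r,\cdot)=O(\log r)$; in full generality one must invoke the finite-logarithmic-measure form of the estimates and track the exceptional sets so that all inequalities hold simultaneously for $r$ outside a single set of finite logarithmic measure, exactly in the spirit of the hypotheses of the preceding Lemmas.
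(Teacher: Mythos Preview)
The paper does not supply a proof of this lemma at all: it is quoted verbatim from Yang--Yi \cite{Yan & Yi & 2003} and used as a black box. So there is no ``paper's own proof'' to compare against.

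That said, your argument is precisely the classical one that appears in the cited reference: induction on $n$, with the base case handled by comparing $T(r,e^{g_0-g_1})$ against $T(r,a_0)+T(r,a_1)$, and the inductive step obtained by dividing through by $a_ne^{g_n}$, differentiating to kill the constant, and invoking the induction hypothesis on the resulting $n$-term identity. Your deduction that $c_j\equiv 0$ forces $a_je^{g_j}=C_ja_ne^{g_n}$ with $C_j\neq 0$ (since $a_j\not\equiv 0$) and hence $T(r,e^{g_0-g_n})=o(T(r))$ is exactly the intended contradiction.

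You have also correctly isolated the one genuine technical point: after differentiation, the new coefficients pick up the factor $g_j'-g_n'=(e^{g_j-g_n})'/e^{g_j-g_n}$, and one must check that $m(r,g_j'-g_n')$ is small relative to $\widetilde T(r)$ rather than merely relative to $T(r,e^{g_j-g_n})$. The standard resolution is exactly what you indicate: the precise form of the logarithmic derivative lemma gives $m(r,g_j'-g_n')=O\big(\log^{+}T(r,e^{g_j-g_n})+\log r\big)$ off a set of finite (logarithmic) measure, and since $T(r,e^{g_j-g_n})\le T(r,e^{g_j-g_0})+T(r,e^{g_0-g_n})$ one can bound $\log^{+}T(r,e^{g_j-g_n})$ by a finite sum of $\log^{+}$'s of characteristics appearing in the definition of $T(r)$; each such $\log^{+}T(r,\cdot)$ is $o(T(r))\le o(\widetilde T(r))$ because $e^{g_j-g_k}$ is transcendental. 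In the finite-order setting of this paper (all $g_j$ are polynomials) the whole issue collapses to $O(\log r)$, as you note. So your proposal is correct and matches the source the paper cites.
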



\begin{proof}[\textbf{Proof of Theorem $\ref{t1}$}]
Let $(f(z),g(z))$ be a pair of finite order transcendental entire solution of \eqref{e2.1}. Let us first make a transformation \bea\label{key} f(z)=\frac{1}{\sqrt{2}}(u+v)\;\;\text{and}\;\;g(z+c)=\frac{1}{\sqrt{2}}(u-v).\eea
\par Then first equation of \eqref{e2.1} yields \beas (1+w)u^2+(1-w)v^2=e^{g_1(z)},\eeas which can be rewritten as \beas \left(\frac{\sqrt{1+w}u}{e^{g_1(z)/2}}+i\frac{\sqrt{1-w}v}{e^{g_1(z)/2}}\right)\left(\frac{\sqrt{1+w}u}{e^{g_1(z)/2}}-i\frac{\sqrt{1-w}v}{e^{g_1(z)/2}}\right)=1.\eeas
\par In view of the above equation and assumption, it follows that there exists a polynomial $p_1(z)$ in $\mathbb{C}^n$ such that \bea\label{e3.1} \frac{u\sqrt{1+w}}{e^{g_1(z)/2}}+i\frac{v\sqrt{1-w}}{e^{g_1(z)/2}}=e^{p_1(z)}\;\;\text{and}\;\;\frac{u\sqrt{1+w}}{e^{g_1(z)/2}}-i\frac{v\sqrt{1-w}}{e^{g_1(z)/2}}=e^{-p_1(z)}.\eea
\par Let \bea\label{e3.2} \gamma_1(z)=p_1(z)+\frac{1}{2}g_1(z),\;\;\gamma_2(z)=-p_1(z)+\frac{1}{2}g_1(z).\eea
\par Then from \eqref{e3.1} and \eqref{e3.2}, we obtain \bea\label{e3.4} u=\frac{1}{2\sqrt{1+w}}\left[e^{\gamma_1}+e^{\gamma_2}\right],\;\; v=\frac{1}{2i\sqrt{1-w}}\left[e^{\gamma_1}-e^{\gamma_2}\right].\eea
\par Therefore, in view of \eqref{key} and \eqref{e3.4}, we obtain \bea\label{e3.5} f(z)=\frac{1}{\sqrt{2}}\left[A_1e^{\gamma_1}+A_2e^{\gamma_2}\right],\;\;g(z+c)=\frac{1}{\sqrt{2}}\left[A_2e^{\gamma_1}+A_1e^{\gamma_2}\right].\eea
\par Similarly, in view of the second equation of \eqref{e2.1}, there exists a polynomial $p_2(z)$ such that \bea\label{e3.6} g(z)=\frac{1}{\sqrt{2}}\left[A_1e^{\gamma_3}+A_2e^{\gamma_4}\right],\;\;f(z+c)=\frac{1}{\sqrt{2}}\left[A_2e^{\gamma_3}+A_1e^{\gamma_4}\right],\eea where \bea\label{e3.7} \gamma_3(z)=p_2(z)+\frac{1}{2}g_2(z),\;\;\gamma_4(z)=-p_2(z)+\frac{1}{2}g_2(z).\eea 
\par From \eqref{e3.5} and \eqref{e3.6}, we easily get \bea\label{e3.8} \begin{cases}A_1e^{\gamma_1(z+c)-\gamma_4(z)}+A_2e^{\gamma_2(z+c)-\gamma_4(z)}-A_2e^{\gamma_3(z)-\gamma_4(z)}=A_1,\\A_1e^{\gamma_3(z+c)-\gamma_2(z)}+A_2e^{\gamma_4(z+c)-\gamma_2(z)}-A_2e^{\gamma_1(z)-\gamma_1(z)}=A_1.\end{cases}\eea
\par Now we discuss the following four possible cases.\vspace{1mm}
\par \textbf{Case 1:} Let $\gamma_1(z)-\gamma_2(z)=\zeta_1$ and $\gamma_3(z)-\gamma_4(z)=\zeta_2$, where $\zeta_1,\zeta_2$ are two complex constants.\vspace{1.2mm}
\par Then, it clearly follows from \eqref{e3.2} and \eqref{e3.7} that $p_1(z)$ and $p_2(z)$ both are constants in $\mathbb{C}$. Let $e^{p_1}=\xi_1$ and $e^{p_2}=\xi_2$.\vspace{1.2mm}
\par Then, from \eqref{e3.5} and \eqref{e3.6}, it follows that \bea\label{e3.9} \begin{cases}
f(z)=\frac{A_1\xi_1+A_2\xi_1^{-1}}{\sqrt{2}}e^{\frac{1}{2}g_1(z)},\\g(z+c)=\frac{A_2\xi_1+A_1\xi_1^{-1}}{\sqrt{2}}e^{\frac{1}{2}g_1(z)},\\g(z)=\frac{A_1\xi_2+A_2\xi_2^{-1}}{\sqrt{2}}e^{\frac{1}{2}g_2(z)},\\f(z+c)=\frac{A_2\xi_2+A_1\xi_2^{-1}}{\sqrt{2}}e^{\frac{1}{2}g_2(z)}.\end{cases}\eea
\par Since $f,g$ are transcendental entire, it follows from \eqref{e3.9} that $A_1\xi_1+A_2\xi_1^{-1}$, $A_2\xi_1+A_1\xi_1^{-1}$, $A_1\xi_2+A_2\xi_2^{-1}$ and $A_2\xi_2+A_1\xi_2^{-1}$ are all non zero constants. Since $p_1$ and $p_2$ are constants and $f,g$ are transcendental entire, it follows from \eqref{e3.9} that $g_1$ and $g_2$ are non-constants.\vspace{1.2mm}
\par After simple calculations, we have from \eqref{e3.9} that \bea\label{e3.10} \begin{cases}(A_1\xi_1+A_2\xi_1^{-1})e^{\frac{1}{2}[g_1(z+c)-g_2(z)]}=A_2\xi_2+A_1\xi_2^{-1},\\(A_1\xi_2+A_2\xi_2^{-1})e^{\frac{1}{2}[g_2(z+c)-g_1(z)]}=A_2\xi_1+A_1\xi_1^{-1}.\end{cases}\eea 
\par Since $g_1,g_2$ are non-constant polynomials in $\mathbb{C}^n$, from \eqref{e3.10}, we see that $g_1(z+c)-g_2(z)=\eta_1$ and $g_2(z+c)-g_1(z)=\eta_2$, where $\eta_1,\eta_2\in\mathbb{C}$. Otherwise, left hand sides of both the equations are transcendental, whereas right hand sides will be polynomial, a contradiction. Therefore, $g_1(z+2c)-g_2(z)=g_2(z+2c)-g_1(z)=\zeta_1+\zeta_2$. Thus, \beas g_1(z)=L(z)+\Phi(z)+d_1\;\;\text{and}\;\; g_2(z)=L(z)+\Phi(z)+d_2,\eeas where $L(z)=\sum_{j=1}^{n}a_jz_j$ and $\Phi(z)$ is a polynomial in $\mathbb{C}^n$ defined as in \eqref{e2.5}, $a_j,d_1,d_2\in\mathbb{C}$, $j=1,2,\ldots,n$.\vspace{1.2mm}
\par Thus, from \eqref{e3.10}, we get \bea\label{e3.11a} e^{\frac{1}{2}[L(c)+d_1-d_2]}=\frac{A_2\xi_2+A_1\xi_2^{-1}}{A_1\xi_1+A_2\xi_1^{-1}}\;\;\text{and}\;\;e^{\frac{1}{2}[L(c)+d_2-d_1]}=\frac{A_2\xi_1+A_1\xi_1^{-1}}{A_1\xi_2+A_2\xi_2^{-1}},\eea which imply \beas e^{L(c)}=\frac{(A_2\xi_2+A_1\xi_2^{-1})(A_2\xi_1+A_1\xi_1^{-1})}{(A_1\xi_1+A_2\xi_1^{-1})(A_1\xi_2+A_2\xi_2^{-1})}\;\;\text{and}\;\; e^{d_1-d_2}=\frac{(A_2\xi_2+A_1\xi_2^{-1})(A_1\xi_2+A_2\xi_2^{-1})}{(A_1\xi_1+A_2\xi_1^{-1})(A_2\xi_1+A_1\xi_1^{-1})}.\eeas
\par Therefore, from \eqref{e3.9}, we obtain \beas f(z)=\frac{A_1\xi_1+A_2\xi_1^{-1}}{\sqrt{2}}e^{\frac{1}{2}[L(z)+\Phi(z)+d_1]}\;\;\text{and}\;\;g(z)=\frac{A_1\xi_2+A_2\xi_2^{-1}}{\sqrt{2}}e^{\frac{1}{2}[L(z)+\Phi(z)+d_2]}.\eeas
\par \textbf{Case 2.} Let $\gamma_1(z)-\gamma_2(z)$ and $\gamma_3(z)-\gamma_4(z)$ both are non-constants. Then, by Lemma \ref{lem3.1a}, it follows from \eqref{e3.8} that \beas A_1e^{\gamma_1(z+c)-\gamma_4(z)}=A_1\;\;\text{or}\;\;A_2e^{\gamma_2(z+c)-\gamma_4(z)}=A_1\eeas and \beas A_1e^{\gamma_3(z+c)-\gamma_2(z)}=A_1\;\;\text{or}\;\;A_2e^{\gamma_4(z+c)-\gamma_2(z)}=A_1.\eeas
\par Now, we consider the following four possible subcases.\vspace{1.2mm}
\par \textbf{Subcase 2.1.} Let \bea\label{e3.11} A_1e^{\gamma_1(z+c)-\gamma_4(z)}=A_1\;\;\text{and}\;\;A_1e^{\gamma_3(z+c)-\gamma_2(z)}=A_1.\eea
\par Then, from \eqref{e3.8} and \eqref{e3.11}, we obtain \bea\label{e3.12}A_2e^{\gamma_2(z+c)-\gamma_3(z)}=A_2\;\;\text{and}\;\;A_2e^{\gamma_4(z+c)-\gamma_1(z)}=A_2.\eea
\par Since $\gamma_j$'s, $j=1,2,3,4$ are all polynomials, it follows from \eqref{e3.9} and \eqref{e3.10} that $\gamma_1(z+c)-\gamma_4(z)=\zeta_1$, $\gamma_3(z+c)-\gamma_2(z)=\zeta_2$, $\gamma_2(z+c)-\gamma_3(z)=\zeta_3$ and $\gamma_4(z+c)-\gamma_1(z)=\zeta_4$, $\zeta_j\in\mathbb{C}$ for $j=1,2,3,4$. This implies that 
\beas\gamma_1(z+2c)-\gamma_1(z)=\gamma_4(z+2c)-\gamma_4(z)=\zeta_1+\zeta_2\;\;\text{and}\eeas \beas\gamma_2(z+2c)-\gamma_2(z)=\gamma_3(z+2c)-\gamma_3(z)=\zeta_3+\zeta_4.\eeas 
\par Therefore, we must get \beas \gamma_1(z)=L_1(z)+\Phi(z)+d_1,\;\; \gamma_4(z)=L_1(z)+\Phi(z)+d_2,\eeas \beas\gamma_2(z)=L_2(z)+\Psi(z)+d_3,\;\;\text{and}\;\; \gamma_3(z)=L_2(z)+\Psi(z)+d_4,\eeas where $L_1(z)=\sum_{j=1}^{n}a_jz_j$, $L_2(z)=\sum_{j=1}^{n}b_jz_j$, $\Phi(z)$ and $\Psi(z)$ are defined in \eqref{e2.5} and \eqref{e2.6}, respectively, $a_{j},,b_j,d_i\in\mathbb{C}$ for $i=1,2$ and $j=1,\ldots,n$. As $\gamma_4(z)-\gamma_3(z)$ and $\gamma_2(z)-\gamma_1(z)$ both are non-constants, we must have $$L_1(z)+\Phi(z)\neq L_2(z)+\Psi(z).$$
\par Therefore, in view of \eqref{e3.2} and \eqref{e3.7}, we have \beas g_1(z)=L_1(z)+L_2(z)+\Phi(z)+\Psi(z)+d_1+d_3\;\;\text{and}\eeas \beas g_2(z)=L_1(z)+L_2(z)+\Phi(z)+\Psi(z)+d_2+d_4.\eeas 
\par Therefore, from \eqref{e3.11} and \eqref{e3.12}, we obtain \bea\label{e3.13} \begin{cases}e^{L_1(c)+d_1-d_2}=1,\;\;e^{L_1(c)+d_2-d_1}=1,\\e^{L_2(c)+d_3-d_4}=1,\;\;e^{L_2(c)+d_4-d_3}=1.\end{cases}\eea
\par After simple computation, we obtain from \eqref{e3.13} that \beas e^{2L_1(c)}=1,\;\;e^{2L_2(c)}=1,\;\;e^{2(d_1-d_2)}=1\;\;\text{and}\;\; e^{2(d_3-d_4)}=1,\eeas which yield \beas e^{L_1(c)}=\pm 1,\;\;e^{d_1-d_2}=\pm 1,\;\;e^{L_2(c)}=\pm 1\;\;\text{and}\;\;e^{d_3-d_4}=\pm 1.\eeas
\par If $e^{L_1(c)}=1$, then from \eqref{e3.13}, we obtain $e^{d_1-d_2}=1$. If $e^{L_1(c)}=-1$, then from \eqref{e3.13}, we obtain $e^{d_1-d_2}=-1$. Similarly, If $e^{L_2(c)}=1$, then from \eqref{e3.13}, it follows that $e^{d_3-d_4}=1$. If $e^{L_2(c)}=-1$, then it follows from \eqref{e3.13} that $e^{d_3-d_4}=-1$.\vspace{1.2mm} 
\par Hence, from \eqref{e3.5} and \eqref{e3.6}, we get \beas f(z)=\frac{1}{\sqrt{2}}\left[A_1e^{L_1(z)+\Phi(z)+d_1}+A_2e^{L_2(z)+\Psi(z)+d_3}\right]\;\;\text{and}\eeas\beas g(z)=\frac{1}{\sqrt{2}}\left[A_1e^{L_2(z)+\Psi(z)+d_4}+A_2e^{L_1(z)+\Phi(z)+d_2}\right].\eeas 
\par \textbf{Subcase 2.2.} Let \bea\label{e3.14}A_1e^{\gamma_1(z+c)-\gamma_4(z)}=A_1\;\;\text{and}\;\; A_2e^{\gamma_4(z+c)-\gamma_2(z)}=A_1.\eea
\par Therefore, from \eqref{e3.2} and \eqref{e3.14}, we obtain \bea\label{e3.15}A_2e^{\gamma_2(z+c)-\gamma_3(z)}=A_2\;\;\text{and}\;\; A_1e^{\gamma_3(z+c)-\gamma_1(z)}=A_2.\eea
\par Since $\gamma_j$'s are polynomials, $j=1,2,3,4$, it follows from \eqref{e3.14} and \eqref{e3.15} that $\gamma_1(z+c)-\gamma_4(z)=\zeta_1$, $\gamma_4(z+c)-\gamma_2(z)=\zeta_2$, $\gamma_2(z+c)-\gamma_3(z)=\zeta_3$ and $\gamma_3(z+c)-\gamma_1(z)=\zeta_4$, $\zeta_j\in\mathbb{C}$ for $j=1,2,3,4$. This implies that $\gamma_1(z+4c)-\gamma_1(z)=\gamma_2(z+4c)-\gamma_2(z)=\sum_{j=1}^{4}\zeta_j$. Hence, we can have $\gamma_1(z)=L(z)+\Phi(z)+k_1$ and $\gamma_2(z)=L(z)+\Phi(z)+k_2$, where $L(z)$ and $\Phi(z)$ are the same as defined in Case 1. But, then we must have $\gamma_2(z)-\gamma_1(z)=k_2-k_1\in\mathbb{C}$, a contradiction.\vspace{1.2mm}
\par \textbf{Subcase 2.3.}  Let \beas A_2e^{\gamma_2(z+c)-\gamma_4(z)}=A_1\;\;\text{and}\;\; A_1e^{\gamma_3(z+c)-\gamma_2(z)}=A_1.\eeas
\par By similar argument as used in \textbf{Subcase 2.2}, we can easily get a contradiction.\vspace{1.2mm}
\par \textbf{Subcase 2.4.}  Let \bea\label{e3.16}A_2e^{\gamma_2(z+c)-\gamma_4(z)}=A_1\;\;\text{and}\;\; A_2e^{\gamma_4(z+c)-\gamma_2(z)}=A_1.\eea
\par In view of \eqref{e3.2} and \eqref{e3.16}, it follows that \bea\label{e3.17}A_1e^{\gamma_1(z+c)-\gamma_3(z)}=A_2\;\;\text{and}\;\; A_1e^{\gamma_3(z+c)-\gamma_1(z)}=A_2.\eea
\par Since $\gamma_1, \gamma_2,\gamma_3,\gamma_4$ are all polynomials, from \eqref{e3.16} and \eqref{e3.17}, we get $\gamma_2(z+c)-\gamma_4(z)=l_1$, $\gamma_4(z+c)-\gamma_2(z)=l_2$, $\gamma_1(z+c)-\gamma_3(z)=l_3$ and $\gamma_3(z+c)-\gamma_1(z)=l_4$, $l_j\in\mathbb{C}$ for $j=1,2,3,4$. This implies that $\gamma_1(z+2c)-\gamma_1(z)=\gamma_3(z+2c)-\gamma_3(z)=l_3+l_4$ and $\gamma_2(z+2c)-\gamma_2(z)=\gamma_4(z+2c)-\gamma_4(z)=l_1+l_2$.\vspace{1.2mm}
\par Therefore, we must have \beas \gamma_1(z)=L_1(z)+\Phi(z)+d_1,\;\; \gamma_3(z)=L_1(z)+\Phi(z)+d_2,\eeas \beas\gamma_2(z)=L_2(z)+\Psi(z)+d_3,\;\;\text{and}\;\; \gamma_4(z)=L_2(z)+\Psi(z)+d_4,\eeas where $L_1(z),L_2(z),\Phi(z),\Psi(z)$ are defined in Subcase 2.1. As $\gamma_2-\gamma_1$ and $\gamma_4-\gamma_3$ are non-constants, we have $L_1(z)+\Phi(z)\neq L_2(z)+\Psi(z)$.\vspace{1.2mm}
\par Therefore, in view of \eqref{e3.2} and \eqref{e3.7}, we obtain \beas g_1(z)=L_1(z)+L_2(z)+\Phi(z)+\Psi(z)+d_1+d_3\;\;\text{and}\eeas \beas g_2(z)=L_1(z)+L_2(z)+\Phi(z)+\Psi(z)+d_2+d_4.\eeas 
\par Hence, from \eqref{e3.16} and \eqref{e3.17}, we have \bea\label{e3.18} \begin{cases}A_2e^{L_2(c)+d_3-d_4}=A_1,\;\;A_2e^{L_2(c)+d_4-d_3}=A_1\\A_1e^{L_1(c)+d_1-d_2}=A_2,\;\;A_1e^{L_1(c)+d_2-d_1}=A_2.\end{cases}\eea
\par From \eqref{e3.18}, it follows that \beas e^{2L_1(c)}=A_2^2/A_1^2,\;\;e^{2L_2(c)}=A_1^2/A_2^2,\;\;e^{2(d_1-d_4)}=1\;\;\text{and}\;\; e^{2(d_3-d_4)}=1,\eeas which yield \beas e^{L_1(c)}=\pm A_2/A_1,\;\;e^{L_2(c)}=\pm A_1/A_2,\;\;e^{d_1-d_4}=\pm1\;\;\text{and}\;\; e^{2(d_3-d_4)}=\pm 1.\eeas
\par If $e^{L_1(c)}=A_2/A_1$, then from \eqref{e3.18}, we get $e^{d_1-d_2}=1$. If $e^{L_1(c)}=-A_2/A_1$, then it follows from \eqref{e3.18} that $e^{d_1-d_2}=-1$. Similarly, If $e^{L_2(c)}=A_1/A_2$, then from \eqref{e3.18}, we get $e^{d_3-d_4}=1$. If $e^{L_2(c)}=-A_1/A_2$, then from \eqref{e3.18}, we get $e^{d_3-d_4}=-1$.\vspace{1.2mm} 
\par Hence, from \eqref{e3.5} and \eqref{e3.6}, we get \beas f(z)=\frac{1}{\sqrt{2}}\left[A_1e^{L_1(z)+\Phi(z)+d_1}+A_2e^{L_2(z)+\Psi(z)+d_3}\right]\;\;\text{and}\eeas \beas g(z)=\frac{1}{\sqrt{2}}\left[A_1e^{L_1(z)+\Phi(z)+d_2}+A_2e^{L_2(z)+\Psi(z)+d_4}\right].\eeas 
\par \textbf{Case 3.} Let $\gamma_3(z)-\gamma_4(z)=\eta\in\mathbb{C}$ and $\gamma_1(z)-\gamma_2(z)$ be non-constant. Then, in view of \eqref{e3.7}, we see that $p_2(z)$ is a constant.\vspace{1.2mm}
\par Since $\gamma_1-\gamma_2$ is non-constant, in view of Lemma \ref{lem3.1a}, it follows from the second equation of \eqref{e3.8} that either $A_1e^{\gamma_3(z+c)-\gamma_2(z)}=A_1$ or $A_2e^{\gamma_4(z+c)-\gamma_2(z)}=A_1$.\vspace{1.2mm}
\par First suppose that $A_1e^{\gamma_3(z+c)-\gamma_2(z)}=A_1$. Then, from the second equation of \eqref{e3.8}, we have $A_2e^{\gamma_4(z+c)-\gamma_1(z)}=A_2$. As all $\gamma_j$, $j=1,2,3,4$ are polynomials, it follows that $\gamma_3(z+c)-\gamma_2(z)=m_1$ and $\gamma_4(z+c)-\gamma_1(z)=m_2$, where $m_1,m_2\in\mathbb{C}$. Therefore, $\gamma_1(z)-\gamma_2(z)=m_1-m_2-\eta$, a constant in $\mathbb{C}$, which contradicts to our assumption.\vspace{1.2mm}
\par Similarly, we can get a contradiction for the case $A_2e^{\gamma_4(z+c)-\gamma_2(z)}=A_1$.\vspace{1.2mm}
\par \textbf{Case 4.} Let $\gamma_2-\gamma_1=\eta_1\in\mathbb{C}$ and $\gamma_4-\gamma_3$ be non-constant. Then, by similar argument as used in \textbf{Subcase 3}, we easily get a contradiction.\end{proof}
\begin{proof}[\textbf{Proof of Theorem $\ref{t2}$}]
Let $(f,g)$ is a pair of finite transcendental entire solutions of the system \eqref{e2.3}.\vspace{1.2mm}
\par Then, in a similar manner as in Theorem \ref{t1}, we obtain that \bea\label{e3.19} \begin{cases}
\dfrac{\partial^k f}{\partial z_1^k}=\displaystyle\frac{1}{\sqrt{2}}\left[A_1e^{\gamma_1(z)}+A_2e^{\gamma_2(z)}\right]\\ g(z+c)=\displaystyle\frac{1}{\sqrt{2}}\left[A_2e^{\gamma_1(z)}+A_1e^{\gamma_2(z)}\right]\\\dfrac{\partial^k g}{\partial z_1^k}=\displaystyle\frac{1}{\sqrt{2}}\left[A_1e^{\gamma_3(z)}+A_2e^{\gamma_4(z)}\right]\\f(z+c)=\displaystyle\frac{1}{\sqrt{2}}\left[A_2e^{\gamma_3(z)}+A_1e^{\gamma_4(z)}\right],\end{cases}\eea where $\gamma_1, \gamma_2,\gamma_3,\gamma_4$ are defined in \eqref{e3.2} and \eqref{e3.7}.\vspace{1.2mm}
\par Differentiating partially $k$ times with respect to $z_1$, second and fourth equations of \eqref{e3.19}, we obtain \bea\label{e3.20}\begin{cases}\dfrac{\partial^k f(z+c)}{\partial z_1^k}=\frac{1}{\sqrt{2}}\left[A_2h_1(z)e^{\gamma_3(z)}+A_1h_2(z)e^{\gamma_4(z)}\right],\vspace{1mm}\\\dfrac{\partial^k g(z+c)}{\partial z_1^k}=\frac{1}{\sqrt{2}}\left[A_2h_3(z)e^{\gamma_1(z)}+A_1h_4(z)e^{\gamma_2(z)}\right],\end{cases}\eea where $h_1,h_2,h_3,h_4$ are given by \beas \begin{cases}h_1(z)=\left(\dfrac{\partial \gamma_3}{\partial z_1} \right)^k+M_k\left(\dfrac{\partial^k \gamma_3}{\partial z_1^k},\ldots,\dfrac{\partial \gamma_3}{\partial z_1}\right),\\ h_2(z)=\left(\dfrac{\partial \gamma_4}{\partial z_1} \right)^k+N_k\left(\dfrac{\partial^k \gamma_4}{\partial z_1^k},\ldots,\dfrac{\partial \gamma_4}{\partial z_1}\right)\\h_3(z)=\left(\dfrac{\partial \gamma_1}{\partial z_1} \right)^k+O_k\left(\dfrac{\partial^k \gamma_1}{\partial z_1^k},\ldots,\dfrac{\partial \gamma_1}{\partial z_1}\right),\\h_4(z)=\left(\dfrac{\partial \gamma_2}{\partial z_1} \right)^k+R_k\left(\dfrac{\partial^k \gamma_2}{\partial z_1^k},\ldots,\dfrac{\partial \gamma_2}{\partial z_1}\right),\end{cases}\eeas where $M_k$ is the partial differential polynomial in $\frac{\partial \gamma_3}{\partial z_1}$ of order less than $k$ in which  $\frac{\partial \gamma_3}{\partial z_1}$ appears in product with at least one more partial derivative of $\gamma_3(z)$ of order greater than or equal to $2$. Similar definitions for $N_k, O_k$ and $R_k$.\vspace{1.2mm}		
\par Now, from the first and third equations of \eqref{e3.19} and \eqref{e3.20}, we obtain \bea\label{e3.21}\begin{cases}
A_1e^{\gamma_1(z+c)-\gamma_4(z)}+A_2e^{\gamma_2(z+c)-\gamma_4(z)}-A_2h_1(z)e^{\gamma_3(z)-\gamma_4(z)}=A_1h_2(z),\\A_1e^{\gamma_3(z+c)-\gamma_2(z)}+A_2e^{\gamma_4(z+c)-\gamma_2(z)}-A_2h_3(z)e^{\gamma_1(z)-\gamma_2(z)}=A_1h_4(z).\end{cases}\eea 	
\par Now, we consider the following four possible cases.\vspace{1.2mm}
\par \textbf{Case 1.} Let $\gamma_2(z)-\gamma_1(z)=\eta_1$ and $\gamma_4(z)-\gamma_3(z)=\eta_2$, where $\eta_1,\eta_2\in\mathbb{C}$. Then, in view of \eqref{e3.2} and \eqref{e3.7}, it follows that $h_1(z)$ and $h_2(z)$ are both constants. Set $e^{h_1}=\xi_1$ and $e^{h_2}=\xi_2$, $\xi_1,\xi_2$ are non-zero constants in $\mathbb{C}$. Therefore, in view of \eqref{e3.19}, we obtain \bea\label{e3.22} \begin{cases}
\dfrac{\partial^k f}{\partial z_1^k}=\frac{A_1\xi_1+A_2\xi_1^{-1}}{\sqrt{2}}e^{\frac{1}{2}g_1(z)},\vspace{.5mm}\\g(z+c)=\frac{A_2\xi_1+A_1\xi_1^{-1}}{\sqrt{2}}e^{\frac{1}{2}g_1(z)},\vspace{.5mm}\\\dfrac{\partial^k g}{\partial z_1^k}=\frac{A_1\xi_2+A_2\xi_2^{-1}}{\sqrt{2}}e^{\frac{1}{2}g_2(z)},\vspace{.5mm}\\f(z+c)=\frac{A_2\xi_2+A_1\xi_2^{-1}}{\sqrt{2}}e^{\frac{1}{2}g_2(z)}.\end{cases}\eea
\par As $f,g$ are transcendental entire, in view of \eqref{e3.22}, we see that $g_1(z), g_2(z)$ both are non-constant polynomials and $A_1\xi_1+A_2\xi_1^{-1}$, $A_2\xi_1+A_1\xi_1^{-1}$, $A_1\xi_2+A_2\xi_2^{-1}$, $A_2\xi_2+A_1\xi_2^{-1}$ are all non-zero constants.\vspace{1.2mm}
\par Also, after simple computations, we obtain from \eqref{e3.22} that \bea\label{e3.23} \begin{cases}		e^{\frac{1}{2}[g_1(z+c)-g_2(z)]}=\dfrac{A_2\xi_2+A_1\xi_2^{-1}}{A_1\xi_1+A_2\xi_1^{-1}}r_1(z),\\e^{\frac{1}{2}[g_2(z+c)-g_1(z)]}=\dfrac{A_2\xi_1+A_1\xi_1^{-1}}{A_1\xi_2+A_2\xi_2^{-1}}r_2(z)\end{cases}\eea with \beas r_1(z)=\left(\frac{1}{2}\dfrac{\partial g_2(z)}{\partial z_1}\right)^k+G_1\left(\dfrac{\partial^k g_2(z)}{\partial z_1^k},\ldots,\dfrac{\partial g_2(z)}{\partial z_1}\right)\;\;\text{and}\eeas \beas r_2(z)=\left(\frac{1}{2}\dfrac{\partial g_1(z)}{\partial z_1}\right)^k+G_2\left(\dfrac{\partial^k g_1(z)}{\partial z_1^k},\ldots,\dfrac{\partial g_1(z)}{\partial z_1}\right),\eeas where $G_1$ is a differential polynomial in $g_2(z)$ of order less than $k$ in which $g_2^{\prime}(z)$ appears in the product with at least one more partial derivative of order greater than or equal to $2$, and similar definition for $G_2$.\vspace{1.2mm}
\par Since $g_1(z),g_2(z)$ are non-constant polynomials, from \eqref{e3.23}, conclude that $r_1,r_2, g_1(z+c)-g_2(z)$ and $g_2(z+c)-g_1(z)$ are all constants. Let $g_1(z+c)-g_2(z)=\eta_3$ and $g_2(z+c)-g_1(z)=\eta_4$, $\eta_3,\eta_4\in\mathbb{C}$. This implies that $g_1(z+2c)-g_1(z)=g_2(z+2c)-g_2(z)=\eta_3+\eta_4$. Hence, we can get \beas g_1(z_1,z_2)=\mathcal{L}(z)+\chi(z_2)+d_1,\;\;g_2(z_1,z_2)=\mathcal{L}(z)+\chi(z_2)+d_2,\eeas where $\mathcal{L}(z)=a_1z_1+a_2z_2$, $\chi(z_2)$ is a polynomial in $z_2$ such that $\chi(z_2+c_2)=\chi(z_2)$, $a_1,a_2,d_1,d_2\in\mathbb{C}$. Since, $r_1,r_2$ both are constants, it follows that $$r_1=r_2=\left(\frac{a_1}{2}\right)^k.$$\vspace{1.2mm}
\par Therefore, from \eqref{e3.23}, we see that \bea\label{e3.24} \begin{cases}		e^{\frac{1}{2}[\mathcal{L}(c)+d_1-d_2]}=\left(\dfrac{a_1}{2}\right)^k\dfrac{A_2\xi_2+A_1\xi_2^{-1}}{A_1\xi_1+A_2\xi_1^{-1}},\vspace{1mm}\\e^{\frac{1}{2}[\mathcal{L}(c)+d_2-d_1]}=\left(\dfrac{a_1}{2}\right)^k\dfrac{A_2\xi_1+A_1\xi_1^{-1}}{A_1\xi_2+A_2\xi_2^{-1}}.\end{cases}\eea
\par Hence, from \eqref{e3.24}, it follows that \beas e^{\mathcal{L}(c)}=\left(\dfrac{a_1}{2}\right)^{2k}\dfrac{(A_2\xi_2+A_1\xi_2^{-1})(A_2\xi_1+A_1\xi_1^{-1})}{(A_1\xi_1+A_2\xi_1^{-1})(A_1\xi_2+A_2\xi_2^{-1})}.\eeas
\par Thus, from \eqref{e3.22}, we get \beas	f(z_1,z_2)=\dfrac{A_2\xi_2+A_1\xi_2^{-1}}{\sqrt{2}}e^{\frac{1}{2}[\mathcal{L}(z)+\chi(z_2)-\mathcal{L}(c)+d_2]}\;\;\text{and}\eeas \beas g(z_1,z_2)=\dfrac{A_2\xi_1+A_1\xi_1^{-1}}{\sqrt{2}}e^{\frac{1}{2}[\mathcal{L}(z)+\chi(z_2)-\mathcal{L}(c)+d_1]}.\eeas
\par \textbf{Case 2.} Let $\gamma_1(z)-\gamma_2(z)$ and $\gamma_3(z)-\gamma_4(z)$ both are non-constant.\vspace{1.2mm}
\par Observe that $h_1(z)$ and $h_2(z)$ both can not be simultaneously zero. Otherwise, from the first equation of \eqref{e3.21}, we obtain $e^{\gamma_1(z+c)-\gamma_2(z+c)}=-1$, which is a contradiction as $\gamma_2(z)-\gamma_1(z)$ is non-constant.\vspace{1.2mm}
\par Now, let $h_1(z)\not\equiv0$ and $h_2(z)\equiv0$. Then, first equation of \eqref{e3.21} reduces to \bea\label{e3.25} A_1e^{\gamma_1(z+c)-\gamma_3(z)}+A_2e^{\gamma_2(z+c)-\gamma_3(z)}=A_2h_1(z).\eea
\par As $\gamma_2(z)-\gamma_1(z)$ is a non-constant polynomial, in view of \eqref{e3.25}, it follows that $\gamma_1(z+c)-\gamma_3(z)$ and $\gamma_2(z+c)-\gamma_3(z)$ both are non-constants.\vspace{1.2mm}
\par Rewrite \eqref{e3.25} as \bea\label{e3.26} A_1e^{\gamma_1(z+c)}+A_2e^{\gamma_2(z+c)}-A_2h_1e^{\gamma_3(z)}\equiv0.\eea
\par Therefore, in view of Lemma \ref{lem3.7}, we can easily get a contradiction from \eqref{e3.26}.\vspace{1.2mm}
\par In a similar manner, we can get a contradiction when  $h_1(z)\equiv0$ and $h_2(z)\not\equiv0$. Hence, $h_1(z)\not\equiv0$ and $h_2(z)\not\equiv0$. By similar arguments, we also get $h_3(z)\not\equiv0$ and $h_4(z)\not\equiv0$.\vspace{1.2mm}
\par Therefore, by \eqref{e3.21} and Lemma \ref{lem3.1a}, we obtain \beas \text{either}\;\; A_1e^{\gamma_1(z+c)-\gamma_4(z)}=A_1h_2(z),\;\; \text{or}\;\;A_2e^{\gamma_2(z+c)-\gamma_4(z)}=A_1h_2(z)\eeas and \beas \text{either}\;\; A_1e^{\gamma_3(z+c)-\gamma_2(z)}=A_1h_4(z),\;\; \text{or}\;\;A_2e^{\gamma_4(z+c)-\gamma_2(z)}=A_1h_4(z).\eeas
\par Now, we discuss the four possible subcases.\vspace{1.2mm}
\par \textbf{Subcase 2.1.} Let \bea\label{e3.27}	A_1e^{\gamma_1(z+c)-\gamma_4(z)}=A_1h_2(z)\;\;\text{and}\;\;A_1e^{\gamma_3(z+c)-\gamma_2(z)}=A_1h_4(z).\eea
Then, by \eqref{e3.21} and \eqref{e3.27}, we get 
\bea\label{e3.28}A_2e^{\gamma_2(z+c)-\gamma_3(z)}=A_2h_1(z)\;\;\text{and}\;\;A_2e^{\gamma_4(z+c)-\gamma_1(z)}=A_2h_3(z).\eea
\par Since $\gamma_j$'s are polynomials, $j=1,\ldots,4$, \eqref{e3.27} and \eqref{e3.28} yield that $h_1,h_2,h_3,h_4$, $\gamma_1(z+c)-\gamma_4(z),\gamma_3(z+c)-\gamma_2(z),\gamma_2(z+c)-\gamma_3(z),\gamma_4(z+c)-\gamma_1(z)$ are all constants. Let $\gamma_1(z+c)-\gamma_4(z)=k_1$, $\gamma_3(z+c)-\gamma_2(z)=k_2$, $\gamma_2(z+c)-\gamma_3(z)=k_3$ and $\gamma_4(z+c)-\gamma_1(z)=k_4$, where $k_j\in\mathbb{C}$, $j=1,\ldots,4$. These imply $\gamma_1(z+2c)-\gamma_1(z)=\gamma_4(z+2c)-\gamma_4(z)=k_1+k_4$ and $\gamma_2(z+2c)-\gamma_2(z)=\gamma_3(z+2c)-\gamma_3(z)=k_2+k_3$. Thus, we have \beas \gamma_1(z)=L_1(z)+\Phi_1(z_2)+d_1,\;\; \gamma_4(z)=L_1(z)+\Phi_1(z_2)+d_2,\eeas \beas\gamma_2(z)=L_2(z)+\Psi_1(z_2)+d_3,\;\;\text{and}\;\; \gamma_3(z)=L_2(z)+\Psi_1(z_2)+d_4,\eeas where $L_1(z)=a_1z_1+a_2z_2$, $L_2(z)=b_1z_1+b_2z_2$, $\Phi_1(z_2)$ and $\Psi_1(z_2)$ are polynomials in $z_2$ such that $\Phi_1(z_2+c_2)=\Phi_1(z_2)$ and $\Psi_1(z_2+c_2)=\Psi_1(z_2)$, $a_{j},,b_j,e_j,t_j\in\mathbb{C}$ for $j=1,2$.\vspace{1.2mm}
\par Since $\gamma_1(z)-\gamma_2(z)$ and $\gamma_3(z)-\gamma_4(z)$ are non-constants, we conclude that $$L_1(z)+\Phi_1(z_2)\neq L_2(z)+\Psi_1(z_2).$$
\par Therefore, in view of \eqref{e3.2} and \eqref{e3.7}, we have \bea\label{1}\begin{cases}
g_1(z_1,z_2)=L_1(z)+L_2(z)+\Phi_1(z_2)+\Psi_1(z_2)+d_1+d_3,\\g_2(z_1,z_2)=L_1(z)+L_2(z)+\Phi_1(z_2)+\Psi_1(z_2)+d_2+d_4.\end{cases}\eea \vspace{1.2mm}
\par Hence, from \eqref{e3.27} and \eqref{e3.28}, we obtain \bea\label{e3.29}\begin{cases} e^{L_1(c)+d_1-d_2}=a_{1}^k,\;\;e^{L_1(c)+d_2-d_1}=a_{1}^k,\\e^{L_2(c)+d_3-d_4}=b_{1}^k,\;\;e^{L_2(c)+d_4-d_3}=b_{1}^k.\end{cases}\eea
\par From \eqref{e3.29}, we obtain \beas e^{2L_1(c)}=a_{1}^{2k},\;\;e^{2(d_1-d_2)}=1,\;\;e^{2L_2(c)}=b_{1}^{2k}\;\;\text{and}\;\;e^{2(d_3-d_4)}=1.\eeas
\par Therefore, we have $e^{L_1(c)}=\pm a_1^k,e^{L_2(c)}=\pm b_1^k,$ $e^{d_1-d_2}=\pm 1$ and $e^{d_3-d_4}=\pm 1$.\vspace{1.2mm}
\par If $e^{L_1(c)}=a_1^k$, then from \eqref{e3.29}, we get $e^{d_1-d_2}=1$. If $e^{L_1(c)}=-a_1^k$, then from \eqref{e3.29}, we get $e^{d_1-d_2}=-1$. If $e^{L_2(c)}=b_1^k$, then from \eqref{e3.29}, we get $e^{d_3-d_4}=1$ and if $e^{L_2(c)}=-b_1^k$, then we get $e^{d_3-d_4}=-1$.
\par Thus, from fourth and second equations of \eqref{e3.19}, we obtain 
\beas\begin{cases}		f(z)=\frac{1}{\sqrt{2}}\left[A_2e^{L_2(z)+\Psi_1(z_2)-L_2(c)+d_4}+A_1e^{L_1(z)+\Phi_1(z_2)-L_1(c)+d_2}\right]\vspace{1mm}\\g(z)=\frac{1}{\sqrt{2}}\left[A_2e^{L_1(z)+\Phi_1(z_2)-L_1(c)+d_1}+A_1e^{L_2(z)+\Psi_1(z_2)-L_2(c)+d_3}\right].\end{cases} \eeas
\par \textbf{Subcase 2.2.} Let \bea\label{e3.30}	A_1e^{\gamma_1(z+c)-\gamma_4(z)}=A_1h_2(z)\;\;\text{and}\;\;A_2e^{\gamma_4(z+c)-\gamma_2(z)}=A_1h_4(z).\eea
Then, in view of \eqref{e3.21} and \eqref{e3.30}, we obtain \bea\label{e3.31}A_2e^{\gamma_2(z+c)-\gamma_3(z)}=A_2h_1(z)\;\;\text{and}\;\;A_1e^{\gamma_3(z+c)-\gamma_1(z)}=A_2h_3(z).\eea
\par Since $\gamma_1(z),\gamma_2(z),\gamma_3(z), \gamma_4(z)$ are polynomials, we conclude from \eqref{e3.30} and \eqref{e3.31} that $\gamma_1(z+c)-\gamma_3(z)=m_1$, $\gamma_4(z+c)-\gamma_1(z)=m_2$, $\gamma_2(z+c)-\gamma_4(z)=m_3$ and $\gamma_3(z+c)-\gamma_2(z)=m_4$, where $m_j\in\mathbb{C}$, $j=1,\ldots,4$. These imply $\gamma_1(z+2c)-\gamma_1(z)=\gamma_2(z+4c)-\gamma_2(z)=\sum_{j=1}^{4}m_j$. Therefore, $\gamma_1(z)=L_1(z)+\Phi_1(s_1)+d_1$ and $\gamma_2(z)=L_1(z)+\Phi_1(s_1)+d_2$, where $L_1(z)$ and $\Phi_1(s_1)$ are defined in Subcase 2.1, $d_1,d_2\in\mathbb{C}$. Hence, $\gamma_1(z)-\gamma_2(z)=d_1-d_2\in\mathbb{C}$, a contradiction.\vspace{1.2mm}
\par \textbf{Subcase 2.3.} Let \beas A_2e^{\gamma_2(z+c)-\gamma_4(z)}=A_1h_2(z)\;\;\text{and}\;\;A_1e^{\gamma_3(z+c)-\gamma_2(z)}=A_1h_4(z).\eeas
\par Then, by similar arguments as in Subcase 2.2, we easily get a contradiction.\vspace{1.2mm}
\textbf{Subcase 2.4.} Let \bea\label{e3.32}	A_2e^{\gamma_2(z+c)-\gamma_4(z)}=A_1h_2(z)\;\;\text{and}\;\;A_2e^{\gamma_4(z+c)-\gamma_2(z)}=A_1h_4(z).\eea
\par Then, by \eqref{e3.21} and \eqref{e3.32}, we get 
\bea\label{e3.33}A_1e^{\gamma_1(z+c)-\gamma_3(z)}=A_2h_1(z)\;\;\text{and}\;\;A_1e^{\gamma_3(z+c)-\gamma_1(z)}=A_2h_3(z).\eea
\par As $\gamma_1(z),\gamma_2(z),\gamma_3(z), \gamma_4(z)$ are polynomials, from \eqref{e3.32} and \eqref{e3.33}, we conclude that $h_1,h_2,h_3,h_4$, $\gamma_2(z+c)-\gamma_4(z),\gamma_4(z+c)-\gamma_2(z),\gamma_1(z+c)-\gamma_3(z),\gamma_3(z+c)-\gamma_1(z)$ are all constants. Let $\gamma_2(z+c)-\gamma_4(z)=n_1$, $\gamma_4(z+c)-\gamma_2(z)=n_2$, $\gamma_1(z+c)-\gamma_3(z)=n_3$ and $\gamma_3(z+c)-\gamma_1(z)=n_4$, where $n_j\in\mathbb{C}$, $j=1,\ldots,4$. Thus, we get $\gamma_1(z+2c)-\gamma_1(z)=\gamma_3(z+2c)-\gamma_3(z)=n_1+n_2$ and $\gamma_2(z+2c)-\gamma_2(z)=\gamma_4(z+2c)-\gamma_4(z)=n_3+n_4$. Hence, by similar arguments as in Subcase 2.1, we obtain 
\beas \gamma_1(z)=L_1(z)+\Phi_1(z_2)+d_1,\;\; \gamma_3(z)=L_1(z)+\Phi_1(z_2)+d_2,\eeas \beas\gamma_2(z)=L_2(z)+\Psi_1(z_2)+d_3,\;\;\text{and}\;\; \gamma_4(z)=L_2(z)+\Psi_1(z_2)+d_4,\eeas where $L_(z),L_2(z),\Phi_1(z_2),\Psi_1(z_2)$ are same as in Subcase 2.1 with $L_1(z)+\Phi_1(z_2)\neq L_2(z)+\Psi_1(z_2)$. Thus, in view of \eqref{e3.2} and \eqref{e3.7}, we obtain the same form of $g_1(z_1,z_2)$ and $g_2(z_1,z_2)$ as in \eqref{1}.\vspace{1.2mm}
\par Therefore, in view of \eqref{e3.32} and \eqref{e3.33}, we have \bea\label{e3.34}\begin{cases} A_2e^{L_2(c)+d_3-d_4}=A_1b_{1}^k,\;\;A_2e^{L_2(c)+d_4-d_3}=A_1b_{1}^k,\\A_1e^{L_1(c)+d_1-d_2}=A_2a_{1}^k,\;\;A_1e^{L_1(c)+d_2-d_1}=A_2a_{1}^k.\end{cases}\eea
\par From \eqref{e3.34}, we obtain \beas e^{2L_1(c)}=\frac{A_2^2}{A_1^2}a_{1}^{2k}\,\;\;e^{2(d_1-d_2)}=1,\;\;e^{2L_2(c)}=\frac{A_1^2}{A_2^2}b_{1}^{2k}\;\;\text{and}\;\;e^{2(d_3-d_4)}=1,\eeas which imply \beas e^{L_1(c)}=\pm\frac{A_2}{A_1}a_{1}^{k}\,\;\;e^{d_1-d_2}=\pm1,\;\;e^{L_2(c)}=\pm\frac{A_1}{A_2}b_{1}^{k}\;\;\text{and}\;\;e^{d_3-d_4}=\pm1.\eeas
\par If $e^{L_1(c)}=A_2a_1^k/A_1$, then from \eqref{e3.34}, we get $e^{d_1-d_2}=1$. If $e^{L_1(c)}=-A_2a_1^k/A_1$, then we have $e^{d_1-d_2}=-1$. If $e^{L_2(c)}=A_1b_1^k/A_2$, then from \eqref{e3.34}, we get $e^{d_3-d_4}=1$. If $e^{L_2(c)}=-A_1b_1^k/A_2$, then we have $e^{d_3-d_4}=-1$.\vspace{1.2mm}
\par Hence, from fourth and second equations of \eqref{e3.19}, we get \beas\begin{cases}
f(z)=\frac{1}{\sqrt{2}}\left[A_2e^{L_1(z)+\Phi_1(z_2)-L_1(c)+d_2}+A_1e^{L_2(z)+\Psi_1(z_2)-L_2(c)+d_4}\right],\vspace{1mm}\\g(z)=\frac{1}{\sqrt{2}}\left[A_2e^{L_1(z)+\Phi_1(z_2)-L_1(c)+d_1}+A_1e^{L_2(z)+\Psi_1(z_2)-L_2(c)+d_3}\right].
\end{cases} \eeas 
\par \textbf{Case 3.} Let $\gamma_3(z)-\gamma_4(z)=\eta\in\mathbb{C}$ and $\gamma_1(z)-\gamma_2(z)$ be non-constant. Then the first equation of \eqref{e3.21} reduces to \bea\label{e3.35}A_1e^{\gamma_1(z+c)-\gamma_4(z)}+A_2e^{\gamma_2(z+c)-\gamma_4(z)}=A_1h_2(z)+A_2h_1(z)e^{\eta}.\eea
\par Note that $A_1h_2(z)+A_2h_1(z)e^{\eta}\not\equiv0$. Otherwise from \eqref{e3.35}, we have \beas A_1e^{\gamma_1(z+c)-\gamma_2(z+c)}=-A_2,\eeas which implies that $\gamma_1(z+c)-\gamma_2(z+c)$ and hence $\gamma_1(z)-\gamma_2(z)$ becomes constant. This is a contradiction.\vspace{1.2mm} 
\par Now, in view of \eqref{e3.35}, it follows that \bea\label{e3.36} T\left(r,e^{\gamma_1(z+c)-\gamma_4(z)}\right)=T\left(r,e^{\gamma_2(z+c)-\gamma_4(z)}\right)+O(1).\eea
\par We note that \beas N\left(r,e^{\gamma_1(z+c)-\gamma_4(z)}\right)=N\left(r,\frac{1}{e^{\gamma_1(z+c)-\gamma_4(z)}}\right)=S\left(r,e^{\gamma_1(z+c)-\gamma_4(z)}\right)\eeas and \beas N\left(r,\frac{1}{e^{\gamma_1(z+c)-\gamma_4(z)}-w}\right)=N\left(r,\frac{1}{e^{\gamma_2(z+c)-\gamma_4(z)}}\right)=S\left(r,e^{\gamma_2(z+c)-\gamma_4(z)}\right),\eeas where $w=h_2+A_2h_1e^{\eta}/A_1$.
\par Now by the second fundamental theorem of Nevanlinna for several complex variables, we get \beas && T\left(r,e^{\gamma_1(z+c)-\gamma_4(z)}\right)\\&& \leq N\left(r,e^{\gamma_1(z+c)-\gamma_4(z)}\right)+N\left(r,\frac{1}{e^{\gamma_1(z+c)-\gamma_4(z)}}\right)+N\left(r,\frac{1}{e^{\gamma_1(z+c)-\gamma_4(z)}-w}\right)\\&&+S\left(r,e^{\gamma_1(z+c)-\gamma_4(z)}\right)\\&&\leq S\left(r,e^{\gamma_1(z+c)-\gamma_4(z)}\right)+S\left(r,e^{\gamma_2(z+c)-\gamma_4(z)}\right).\eeas
\par This implies that $\gamma_1(z+c)-\gamma_4(z)$ is constant. In view of \eqref{e3.36}, we see that $\gamma_2(z+c)-\gamma_4(z)$ is also constant. These imply $\gamma_1(z+c)-\gamma_2(z+c)$ and hence $\gamma_1(z)-\gamma_2(z)$ is a constant. This is a contradiction.\vspace{1.2mm}
\par \textbf{Case 4.} Let $\gamma_1-\gamma_2=\eta_1\in\mathbb{C}$ and $\gamma_3-\gamma_4$ be non-constant.  Then, by similar arguments as used in Subcase 3, we can get a contradiction. So, we omit the details.\end{proof}
\begin{proof}[\textbf{Proof of Theorem $\ref{t3}$}]
Let $(f(z),g(z))$ be a pair of finite order transcendental entire solution of \eqref{e1.6}.\vspace{1.2mm}
\par Then, by similar argument as in Theorem \ref{t1}, we obtain \bea\label{e3.37} \begin{cases}		\dfrac{\partial^k f}{\partial z_1^k}=\dfrac{1}{\sqrt{2}}\left[A_1e^{p_1(z)}+A_2e^{-p_1(z)}\right],\\ g(z+c)-g(z)=\displaystyle\frac{1}{\sqrt{2}}\left[A_2e^{p_1(z)}+A_1e^{-p_1(z)}\right],\\\dfrac{\partial^k g}{\partial z_1^k}=\displaystyle\frac{1}{\sqrt{2}}\left[A_1e^{p_2(z)}+A_2e^{-p_2(z)}\right],\\f(z+c)-f(z)=\displaystyle\frac{1}{\sqrt{2}}\left[A_2e^{p_2(z)}+A_1e^{-p_2(z)}\right],\end{cases}\eea where $p_1(z)$ and $p_2(z)$ are two non-constant polynomials.\vspace{1.2mm}
\par Differentiating fourth equation of \eqref{e3.37} $k$ times, we get \bea\label{e3.38} \dfrac{\partial^k f(z+c)}{\partial z_1^k}-\dfrac{\partial^k f}{\partial z_1^k}=\frac{1}{\sqrt{2}}\left[A_2H_1e^{p_2(z)}+A_1H_2e^{-p_2(z)}\right],\eea where $H_1=\left(\frac{\partial p_2}{\partial z_1}\right)^k+M_k\left(\frac{\partial^k p_2}{\partial z_1^k},\ldots,\frac{\partial p_2}{\partial z_1}\right)$ and $H_2=\left(-\frac{\partial p_2}{\partial z_1}\right)^k+N_k\left(\frac{\partial^k p_2}{\partial z_1^k},\ldots,\frac{\partial p_2}{\partial z_1}\right)$, where $M_k$ is a partial differential polynomial in $p_2(z)$ of degree less than $k$ in which $\frac{\partial p_2}{\partial z_1}$ appears in the product with at least one more partial derivative of $p_2$ of order greater than $1$. Similar definition is for $N_k$.\vspace{1.2mm}
\par Now in view of the first equation of \eqref{e3.37} and \eqref{e3.38}, we obtain 
\bea&&\label{e3.39}	A_1e^{p_1(z+c)+p_2(z)}+A_2e^{-p_1(z+c)+p_2(z)}-A_1e^{p_1(z)+p_2(z)}-A_2e^{-p_1(z)+p_2(z)}\nonumber\\&&-A_2H_1e^{2p_2(z)}=A_1H_2.\eea
\par Similarly in view of the second and third equations of \eqref{e3.37}, we obtain \bea&&\label{e3.40}A_1e^{p_2(z+c)+p_1(z)}+A_2e^{-p_2(z+c)+p_1(z)}-A_1e^{p_2(z)+p_1(z)}-A_2e^{-p_2(z)+p_1(z)}\nonumber\\&&-A_2H_3e^{2p_1(z)}=A_1H_4,\eea where $H_3=\left(\frac{\partial p_1}{\partial z_1}\right)^k+O_k\left(\frac{\partial^k p_1}{\partial z_1^k},\ldots,\frac{\partial p_1}{\partial z_1}\right)$ and $H_4=\left(-\frac{\partial p_1}{\partial z_1}\right)^k+R_k\left(\frac{\partial^k p_1}{\partial z_1^k},\ldots,\frac{\partial p_1}{\partial z_1}\right)$, where $O_k$ is a differential polynomial in $p_1(z)$ of degree less than $k$ in which $\frac{\partial p_1}{\partial z_1}$ appears in the product with at least one higher order derivative of $p_1$. Similar definition is for $R_k$.\vspace{1.2mm}
\par Now, we consider the following two possible cases.\vspace{1.2mm}
\par \textbf{Case 1.} Let $p_2(z)-p_1(z)=\eta$, $\eta\in\mathbb{C}$. Then \eqref{e3.39} and \eqref{e3.40} yield \bea\label{e3.41}\begin{cases}A_1e^{\eta}e^{p_1(z+c)+p_1(z)}+A_2e^{\eta}e^{p_1(z)-p_1(z+c)}-e^{\eta}(A_1+A_2H_1e^{\eta})e^{2p_1(z)}\\=A_1H_2+A_2e^{\eta},\\A_1e^{\eta}e^{p_1(z+c)+p_1(z)}+A_2e^{-\eta}e^{p_1(z)-p_1(z+c)}-(A_1e^{\eta}+A_2H_3)e^{2p_1(z)}\\=A_1H_4+A_2e^{-\eta}.\end{cases}\eea
\par Note that $A_1+A_2H_1e^{\eta}$ and $A_1H_2+A_2e^{\eta}$ can not be zero simultaneously. Otherwise, from the first equation of \eqref{e3.41}, we get $p_1(z+c)$, and hence $p_1(z)$ is constant, a contradiction.\vspace{1.2mm}
\par Next, suppose that $A_1+A_2H_1e^{\eta}\not\equiv0$ and $A_1H_2+A_2e^{\eta}\equiv0$. The the first equation of \eqref{e3.41} reduces to \bea\label{e3.42}A_1e^{p_1(z+c)-p_1(z)}+A_2e^{-(p_1(z)+p_1(z+c))}=A_1+A_2H_1e^{\eta}.\eea
\par Now, observe that \beas N\left(r,e^{-(p_1(z+c)+p_1(z))}\right)=N\left(r,\frac{1}{e^{-(p_1(z+c)+p_1(z))}}\right)=S\left(r,e^{-(p_1(z+c)+p_1(z))}\right)\eeas and in view of \eqref{e3.42}, we have \beas N\left(r,\frac{1}{e^{-(p_1(z+c)+p_1(z))}-w}\right)=N\left(r,\frac{1}{e^{p_1(z+c)-p_1(z)}}\right)=S\left(r,e^{-(p_1(z+c)+p_1(z))}\right),\eeas where $w=(A_1+A_2H_1e^{\eta})/A_2$.\vspace{1.2mm}
\par By the second fundamental theorem of Nevanlinna, we obtain \beas&& T\left(r,e^{-(p_1(z+c)+p_1(z))}\right)\\&&\leq \ol N\left(r,e^{-(p_1(z+c)+p_1(z))}\right)+\ol N\left(r,\frac{1}{e^{-(p_1(z+c)+p_1(z))}}\right)\\&&+ \ol N\left(r,\frac{1}{e^{-(p_1(z+c)+p_1(z))}-w}\right)+S\left(r,e^{-(p_1(z+c)+p_1(z))}\right)\\&&\leq S\left(r,e^{-(p_1(z+c)+p_1(z))}\right)+S\left(r,e^{p_1(z+c)-p_1(z)}\right).\eeas
\par This implies that $p_1(z+c)+p_1(z)$ is constant, and hence $p_1(z)$ is constant, which is a contradiction.\vspace{1.2mm}
\par Similarly, we can get a contradiction for the case $A_1+A_2H_1e^{\eta}\equiv0$ and $A_1H_2+A_2e^{\eta}\not\equiv0$. Hence, $A_1+A_2H_1e^{\eta}\not\equiv0$ and $A_1H_2+A_2e^{\eta}\not\equiv0$.\vspace{1.2mm}
\par In a similar manner, we can show that $A_1e^{\eta}+A_2H_3\not\equiv0$ and $A_1H_4+A_2e^{-\eta}\not\equiv0$.\vspace{1.2mm}
\par Therefore, in view of Lemma \ref{lem3.1} and \eqref{e3.41}, we obtain \bea\label{e3.43}\begin{cases}
A_2e^{\eta}e^{p_1(z)-p_1(z+c)}=A_1H_2+A_2e^{\eta},\\A_2e^{-\eta}e^{p_1(z)-p_1(z+c)}=A_1H_4+A_2e^{-\eta}.\end{cases}\eea 
\par By \eqref{e3.41} and \eqref{e3.43}, we get \bea\label{e3.44}\begin{cases}A_1e^{p_1(z+c)-p_1(z)}=A_1+A_2H_1e^{\eta},\\A_1e^{\eta}e^{p_1(z+c)-p_1(z)}=A_2H_3+A_1e^{\eta}.\end{cases}\eea
\par Since $p_1(z)$ is a non-constant polynomial, it follows from \eqref{e3.43} and \eqref{e3.44} that $p_1(z+c)-p_1(z)$, $H_1,H_2,H_3,H_4$ are all constants. Otherwise, L.H.S. of both the equations of \eqref{e3.43} and \eqref{e3.44} are transcendental entire, whereas R.H.S's. are polynomials. Therefore, we must have $p_1(z)=L(z)+\Phi_1(z_2)+\gamma$, where $L(z)=\alpha z_1+\beta z_2$, $\Phi_1(z_2)$ is a polynomial in $z_2$ with $\Phi_1(z_2+c_2)=\Phi_1(z_2)$, $\alpha,\beta,\gamma\in\mathbb{C}$. Thus, $p_2(z)=L(z)+\Phi_1(z_2)+\gamma+\eta$. \vspace{1.2mm}
\par Therefore, in view of \eqref{e3.43}, \eqref{e3.44} and the form of $H_1$, $H_2$, $H_3$ and $H_4$, we obtain \bea\label{e3.45} \begin{cases}A_2e^{\eta}e^{-\alpha c}=A_1(-\alpha)^k+A_2e^{\eta},\;\; A_2e^{-\eta}e^{-\alpha c}=A_1(-\alpha)^k+A_2e^{-\eta},\\ A_1e^{\alpha c}=A_1+A_2\alpha^k e^{\eta},\;\; A_1e^{\eta}e^{\alpha c}=A_2\alpha^k+A_1e^{\eta}.\end{cases}\eea
\par From \eqref{e3.45}, we obtain \beas e^{L(c)}=\frac{A_2e^{\eta}}{(-1)^kA_1\alpha^k+A_2e^{\eta}},\;\;e^{2\eta}=1\;\;\text{and}\;\;\alpha^k=-\frac{(-1)^kA_1^2+A_2^2}{(-1)^kA_1A_2e^{\eta}}\eeas\vspace{1.2mm}
\par Thus, from \eqref{e3.37}, we obtain \beas f(z)=\frac{A_1e^{L(z)+\Phi_1(z_2)+\gamma}+A_2e^{-(L(z)+\Phi_1(z_2)+\gamma)}}{\sqrt{2}\alpha^k}\;\; \text{and}\eeas \beas g(z)=\frac{A_1e^{L(z)+\Phi_1(z_2)+\gamma+\eta}+A_2e^{-(L(z)+\Phi_1(z_2)+\gamma+\eta)}}{\sqrt{2}\alpha^k}.\eeas
\par \textbf{Case 2.} Let $p_2(z)-p_1(z)$ be non-constant.\vspace{1.2mm}
\par Now we consider two possible subcases.\vspace{1.2mm}
\par \textbf{Subcase 2.1.} Let $p_2(z)+p_1(z)=\eta$, where $\eta$ is a constant in $\mathbb{C}$.\vspace{1.2mm}
\par Then, \eqref{e3.39} and \eqref{e3.40} yield  \bea\label{e2.46}\begin{cases}A_1e^{\eta}e^{p_1(z+c)-p_1(z)}+A_2e^{\eta}e^{-(p_1(z+c)+p_1(z))}-A_2e^{\eta}(1+H_1e^{\eta})e^{-2p_1(z)}\\=A_1(H_2+e^{\eta}),\\A_1e^{\eta}e^{p_1(z)-p_1(z+c)}+A_2e^{-\eta}e^{p_1(z+c)+p_1(z)}-A_2(H_3+e^{-\eta})e^{2p_1(z)}\\=A_1(H_4+e^{\eta}).\end{cases}\eea
\par Now, by similar argument as in Case 1, we can prove that $1+H_1e^{\eta}$, $H_2+e^{\eta}$, $H_3+e^{-\eta}$ and $H_4+e^{\eta}$ are all non-zero.\vspace{1.2mm}
\par Therefore, by Lemma \ref{lem3.1} and \eqref{e2.46}, we have \bea\label{e2.47} 	e^{p_1(z+c)-p_1(z)}=H_2e^{-\eta}+1\;\;\text{and}\;\;e^{p_1(z)-p_1(z+c)}=H_4e^{-\eta}+1.\eea
\par From \eqref{e2.46} and \eqref{e2.47}, we obtain \bea\label{e2.48}e^{-p_1(z+c)+p_1(z)}=1+H_1e^{\eta}\;\;\text{and}\;\;e^{p_1(z+c)-p_1(z)}=1+e^{\eta}H_3.\eea
\par Then by similar arguments as in the Case 1, we obtain that \beas p_1(z)=L(z)+\Phi_1(z_2)+\gamma \;\;\text{and}\;\; p_2(z)=-(L(z)+\Phi_1(z_2)+\gamma)+\eta.\eeas 
\par Therefore, from \eqref{e2.47} and \eqref{e2.48}, we obtain \bea\label{e2.49}\begin{cases}e^{L(c)}=1+e^{-\eta}\alpha^k,\;\;e^{-L(c)}=e^{-\eta}(-\alpha)^k+1,\\e^{-L(c)}=1+e^{\eta}(-\alpha)^k,\;\;e^{L(c)}=1+e^{\eta}\alpha^k.\end{cases}\eea
\par From first and fourth equations of \eqref{e2.49}, we get \beas e^{2\eta}=1.\eeas 
\par From the above two equations of \eqref{e2.49}, we obtain \bea\label{e2.50}(-1)^ke^{-\eta}\alpha^k+1+(-1)^k=0.\eea
\par If $k$ is odd, then in view of \eqref{e2.50}, we can get a contradiction. If $k$ is even, then from \eqref{e2.50}, we get \beas \alpha^k=-2e^{\eta}.\eeas
\par Therefore, from \eqref{e3.37}, we have \beas f(z)=\frac{A_1e^{L(z)+\Phi_1(z_2)+\gamma}+A_2e^{-(L(z)+\Phi_1(z_2)+\gamma)}}{\sqrt{2}\alpha^k},\eeas \beas g(z)=\frac{A_1e^{-(L(z)+\Phi_1(z_2)+\gamma)+\eta}+A_2e^{L(z)+\Phi_1(z_2)+\gamma-\eta}}{\sqrt{2}\alpha^k}.\eeas
\par \textbf{Subcase 2.2.} Let $p_2(z)+p_1(z)$ be a non-constant polynomial.\vspace{1.2mm}
\par Now we consider four possible cases below.
\par \textbf{Subcase 2.2.1.} Let $p_1(z+c)+p_2(z)=\zeta_1$ and $p_2(z+c)+p_1(z)=\zeta_2$, where $\zeta_1,\zeta_2\in\mathbb{C}$. Then, we easily see that $p_1(z+2c)-p_1(z)=\zeta_1-\zeta_2$ and $p_2(z+2c)-p_2(z)=\zeta_2-\zeta_1$. This implies that $p_1(z)=\alpha z+\beta_1$ and $p_2(z)=-\alpha z+\beta_2$, where $\alpha,\beta_1,\beta_2\in\mathbb{C}$. But, then $p_1(z)+p_2(z)=\beta_1+\beta_2=$ constant, a contradiction.\vspace{1.2mm}
\par \textbf{Subcase 2.2.2.} Let $p_1(z+c)+p_2(z)=\zeta_1\in\mathbb{C}$ and $p_2(z+c)+p_1(z)$ is non-constant.\vspace{1.2mm}
\par Then, by Lemma \ref{lem3.1} and \eqref{e3.40}, we get $A_2e^{-p_2(z+c)+p_1(z)}=A_1H_4$. This implies that $-p_2(z+c)+p_1(z)=\zeta_2\in\mathbb{C}$. Therefore, we must have $p_1(z+2c)+p_1(z)=\zeta_1+\zeta_2$, and hence $p_1(z)$ must be a constant, a contradiction.\vspace{1.2mm}
\par \textbf{Subcase 2.2.3.} Let $p_2(z+c)+p_1(z)=\zeta_1\in\mathbb{C}$ and $p_1(z+c)+p_2(z)$ is non-constant.\vspace{1.2mm}
\par Then, by similar argument as in subcase 2.2.2, we obtain a contradiction.\vspace{1.2mm}
\par \textbf{Subcase 2.2.4.} Let $p_2(z+c)+p_1(z)$ and $p_1(z+c)+p_2(z)$ both are non-constant.\vspace{1.2mm}
\par Then, using Lemma \ref{lem3.1}, we obtain from \eqref{e3.39} and \eqref{e3.40} that \beas A_2e^{-p_1(z+c)+p_2(z)}=A_1H_2\;\; \text{and}\;\; A_2e^{-p_2(z+c)+p_1(z)}=A_1H_4.\eeas
\par As $p_1(z)$ and $p_2(z)$ are non-constant polynomials, we must have $-p_1(z+c)+p_2(z)=\xi_1$ and $-p_2(z+c)+p_1(z)=\xi_2$, where $\xi_1,\xi_2\in\mathbb{C}$. This implies that $-p_1(z+c)+p_1(z)=-p_2(z+c)+p_2(z)=\xi_1+\xi_2$. Therefore, we must get $p_1(z)=\alpha z+\beta_1$ and $p_2(z)=\alpha z+\beta_2$, where $\alpha,\beta_1,\beta_2\in\mathbb{C}$. But, then we get $p_2(z)-p_1(z)=\beta_2-\beta_1=$constant, which is a contradiction.\end{proof}

\section{Statements and Declarations}
\vspace{1.3mm}

\noindent \textbf {Conflict of interest} The authors declare that there are no conflicts of interest regarding the publication of this paper.
\vspace{1.5mm}

\noindent{\bf Funding} There is no funding received from any organizations for this research work.
\vspace{1.5mm}

\noindent \textbf {Data availability statement}  Data sharing is not applicable to this article as no database were generated or analyzed during the current study.
\vspace{1.5mm}

\noindent{\bf Acknowledgment:} The authors would like to thank the referee(s) for the helpful suggestions and comments to improve the exposition of the paper.


\begin{thebibliography}{99}
	
	
	\bibitem{Biancofiore & Stoll & 1981} Biancofiore, A, Stoll, W.: \textit{Another proof of the lemma of the logarithmic derivative in several complex variables, Recent developments in several complex variables}, Princeton University Press, Princeton, 29--45 ( 1981)	
	
	\bibitem{Cao & Korhonen & 2016} Cao, T.B., Korhonen, R.J.: \textit{A new version of the second main theorem for meromorphic mappings intersecting hyperplanes in several complex variables}, J. Math. Anal. Appl. \textbf{444}(2), 1114--1132 (2016)
	
	\bibitem{Cao & Xu & Ann. Math. Pure Appl. & 2020} T. B. Cao and L. Xu, \textit{Logarithmic difference lemma in several complex variables and partial difference equations}, Ann. Math. Pure Appl. \textbf{199}, 767--794 (2020)
	
	
	\bibitem{Chiang & Feng & 2008} Chiang, Y.M., Feng, S.J.: \textit{On the Nevanlinna characteristic of $f(z+\eta)$ and difference equations in the complex plane}, Ramanujan J. \textbf{16}(1), 105--129 (2008).
	
	\bibitem{Chiang & Feng & Trans. Am. & 2009} Chiang,Y.M., Feng, S.J.: \textit{On the growth of logarithmic differences, difference quotients and logarithmic derivatives of meromorphic functions}, Trans. Am. math. Soc. \textbf{361}, 3767--3791 (2009).
	
	\bibitem{Chen & JMMA & 2011} Chen, Z.X.: \textit{Growth and zeros of meromorphic solution of some linear difference equations}, J. Math. Anal. Appl. \textbf{373}, 235--241 (2011).
	
	\bibitem{Chen & Acta Mathematica Scientia & 2012} Chen, Z.X.: \textit{Zeros of entire solutions to complex linear difference euations}. Acta Math. Sci. Ser. B Engl. Ed. \textbf{32}, 1141--1148 (2012).
	
	\bibitem{Courtant Hilbert & 1962} Courant, R., Hilbert, D.: \textit{Methods of Mathematical Physics, vol. II: Partial Differential Equations}, Interscience (New York, 1962)
	
	\bibitem{Gao & Acta Math. Sinica & 2016} L. Y. Gao: \textit{Entire solutions of two types of systems of complex differential-difference equations}, Acta Math. Sinica (Chin. Ser.) \textbf{59} (2016), 677--685.
	
	\bibitem{Garabedian & Wiley & 1964} Garabedian, P.R.: \textit{Partial Differential Equations}, Wiley (New York, 1964).
	
	\bibitem{Gross & Bull. Amer. & 1966} Gross, F.: \textit{On the equation $f^n(z) + g^n(z) = 1$}, Bull. Amer. Math. Soc. \textbf{72}, 86--88 (1966).
	
	\bibitem{Gross & Amer. Math. & 1966}  Gross, F.: \textit{On the equation $f^n(z) + g^n(z) = h^n(z)$}, Amer. Math. Monthly. \textbf{73}, 1093--1096 (1966).
	
	\bibitem{Halburd & Korhonen & 2006} Halburd, R.G., Korhonen, R.J.: \textit{Difference analogue of the lemma on the logarithmic derivative with applications to difference equations}, J. Math. Anal. Appl. \textbf{314}, 477--487 (2006).
	
	\bibitem{Halburd & Korhonen & Ann. Acad. & 2006} Halburd, R.G.,  Korhonen, R.J.: \textit{Nevanlinna theory for the difference operator}, Ann. Acad. Sci. Fenn. Math. \textbf{31}, 463--478 (2006).
	
	
	\bibitem{Haldar & 2023 & Mediterr} Haldar, G.: \textit{Solutions of Fermat-type partial differential difference equations in $\mathbb{C}^2$}, {Mediterr. J. Math.}, \textbf{20} (2023), 50, DOI: https://doi.org/10.1007/s00009-022-02180-6 
	
	\bibitem{Haldar-Ahamed & Anal. Math. Phys. 2022} {Haldar, G., Ahamed, M.B.}: \textit{Entire solutions of several quadratic binomial and trinomial partial differential-difference equations in $\mathbb{C}^2$}, {Anal. Math. Phys.} \textbf{12} (2022), Article number: 113, DOI: https:10.1007/s13324-022-00722-5
	
	\bibitem{Hayman & 1964} Hayman,  W.K.: \textit{Meromorphic Functions}, The Clarendon Press, Oxford, 1964.
	
	
	
	\bibitem{Hu & Li & Yang & 2003} Hu, P.C., Li, P., Yang, C. C.: \textit{Unicity of Meromorphic Mappings, Advances in Complex Analysis and Its Applications}, vol. 1, Kluwer Academic Publishers, Dordrecht, Boston, London (2003).
	
	
	\bibitem{Iyer & J. Indian. Math. Soc. & 1939} Iyer, G.: \textit{On certain functional equations}. J. Indian. Math. Soc. \textbf{3}, 312--315 (1939).
	
	\bibitem{Korhonen & CMFT & 2012} Korhonen, R.J.:\textit{ A difference Picard theorem for meromorphic functions of several variables}, Comput. Methods Funct. Theory \textbf{12}(1), 343--361 (2012)
	
	\bibitem{Laine & 1993} Laine, I.: \textit{Nevanlinna Theory and Complex Differential Equations}, Walter de Gruyter, Berlin, (1993).
	
	
	
	\bibitem{Li Complex Var & 1996} Li, B.Q.: \textit{On reduction of functional-differential equations}, Complex Var. \textbf{31}, 311--324 (1996)
	
	\bibitem{Li & Int. J. & 2004} Li, B.Q.: \textit{On entire solutions of Fermat type partial differential equations}, Int. J. Math. \textbf{15}, 473--485 (2004)
	
	\bibitem{Li & Arch. Math. & 2008} B. Q. Li, \textit{On certain non-linear differential equations in complex domains}, Arch. Math. \textbf{91} (2008), 344--353.
	
	\bibitem{Liu-Gao & Sci. Math. & 2019} Liu, M.L., Gao, L.Y.: \textit{Transcendental solutions of systems of complex differential-difference equations}, Sci. Sin. Math. \textbf{49}, 1--22 (2019).
	
	
	\bibitem{Liu & JMAA & 2009} Liu, K.: \textit{Meromorphic functions sharing a set with applications to difference equations}. J. Math. Anal. Appl. \textbf{359}, 384--393 (2009).
	
	\bibitem{Liu & Yang & CMFT & 2013} Liu, K., Yang, L.Z.: \textit{On Entire Solutions of Some Differential-Difference Equations}. Comput. Methods Funct. Theory \textbf{13}, 433--447 (2012).	
	
	
	\bibitem{Liu & Cao & EJDE & 2013} Liu, K., Cao, T.B.: \textit{Entire solutions of Fermat type q-difference-differential equations}. Electron. J. Diff. Equ. \textbf{59}, 1--10 (2013).
	
	\bibitem{Liu & Cao & Arch. Math. & 2012} Liu, K., Cao, T.B., Cao, H.Z.: \textit{Entire solutions of Fermat-type differential-difference equations}. Arch. Math. \textbf{99}, 147--155 (2012).
	
	\bibitem{Liu-Dong & EDJE & 2015} Liu, K., Dong, X.: \textit{Fermat-type differential and difference equations}, Electron. J. Differ.Equ. \textbf{2015}, 1--10 (2015).
	
	\bibitem{Liu & Yang & 2016} Liu, K.,  Yang, L.Z.: \textit{A note on meromorphic solutions of Fermat types equations}, An. Stiint. Univ. Al. I. Cuza Lasi Mat. (N. S.). \textbf{1}, 317--325 (2016).
	
	
	
	
	\bibitem{Montel & Paris & 1927} Montel, P.: \textit{Lecons sur les familles de nomales fonctions analytiques et leurs applications}. Gauthier-Viuars Paris, 135--136  (1927).
	
	
	
	
	\bibitem{Rauch & 1991} Rauch, J.: \textit{Partial Differential Equations}, Springer-Verlag (New York, 1991)
	
	
	\bibitem{ronkin & AMS & 1971} Ronkin, L.I.: \textit{Introduction to the Theory of Entire Functions of Several Variables}, Moscow: Nauka 1971 (Russian), American Mathematical Society, Providence (1974)
	
	\bibitem{Saleeby & Analysis & 1999} Saleeby, E.G.: \textit{Entire and meromorphic solutions of Fermat-type partial differential equations}, Analysis (Munich) \textbf{19}, 369--376 (1999)	
	
	\bibitem{Saleeby & Complex Var. & 2004} Saleeby, E.G.: \textit{On entire and meromorphic solutions of $\lambda u^{k}+\sum_{i=1}^{n}u_{z_i}^m=1$}, Complex Var. Theory Appl. \textbf{49}, 101--107 (2004)
	
	\bibitem{Saleeby & Aeq. Math. & 2013} E. G. Saleeby, \textit{On complex analytic solutions of certain trinomial functional and partial differential equations}, Aequationes Math. 85 (2013), 553--562.
	
	\bibitem{Stoll & AMS & 1974} Stoll, W.: \textit{Holomorphic Functions of Finite Order in Several Complex Variables}, American Mathematical Society, Providence, 1974	
	
	\bibitem{Tang & Liao & 2007} Tang, J.F., Liao, L.W.: \textit{The transcendental meromorphic solutions of a certain type of non-linear differential equations}, J. Math. Anal. Appl. \textbf{334}, 517--527 (2007).
	
	\bibitem{Tailor & Wiles & 1995} Taylor, R., Wiles, A.: \textit{Ring-theoretic properties of certain Hecke algebra}, Ann. Math. \textbf{141}, 553--572 (1995)
	
	
	\bibitem{Wiles & Ann. Math. 1995} Wiles, A.: \textit{Modular elliptic curves and Fermat last theorem}, Ann. Math. \textbf{141}, 443--551 (1995)
	
	\bibitem{Xu & Cao & 2018} Xu, L., Cao, T.B.: \textit{Solutions of complex Fermat-type partial difference and differential-difference equations}, Mediterr. J.Math. \textbf{15}, 1--14 (2018)
	
	
	
	\bibitem{Xu & Cao & 2020} Xu, L., Cao, T.B.: \textit{Correction to: Solutions of Complex Fermat-Type Partial Difference and Differential-Difference Equations}, Mediterr. J.Math. \textbf{17}, 1--4 (2020)
	
	\bibitem{Xu Haldar & EJDE & 2023} Xu, H.Y., Haldar, G.: \textit{Solutions of complex nonlinear functional equations including second order partial differential and difference equations in $\mathbb{C}^2$}, Electron. J. Differ. Equ., \textbf{43}(2023), 1--18
	
	\bibitem{Xu Jiang RCSM 2022}Xu, H.Y., Jiang, Y.Y.: \textit{Results on entire and meromorphic solutions for several systems of quadratic trinomial functional equations with two complex variables}, RACSAM (2022). https://doi.org/10.1007/s13398-021-01154-9
	
	
	\bibitem{Xu-Liu-Li-JMAA-2020} Xu, X.Y., Liu, S.Y., Li, Q.P.: \textit{Entire solutions for several systems of nonlinear difference and partial differential-difference equations of Fermat-type}, J. Math. Anal. Appl. \textbf{483}, 123--641 (2020)
	
	
	\bibitem{Yang & 1970} Yang, C.C.: \textit{A generalization of a theorem of P. Montel on entire functions}, Proc. Amer. Math. Soc. \textbf{26}, 332--334 (1970)
	
	\bibitem{Yang & Li & 2004} Yang, C.C., Li, P.: \textit{On the transcendental solutions of a certain type of non-linear differential equations}, Arch. Math. \textbf{82}, 442--448 (2004)
	
	
	
	
	
	\bibitem{Yan & Yi & 2003} Yang, C.C., Yi, H.X.: \textit{Uniqueness Theory of Meromorphic Functions}, Kluwer Acad. Publ. (2003)
	
	\bibitem{Zheng Tu & JMMA & 2011} Zheng, X.M., Tu, J.: \textit{Growth of meromorphic solutions of linear difference equations}, J. Math. Anal. Appl. \textbf{384}, 349--356 (2011)
	
\end{thebibliography}
\end{document}